\theoremstyle{plain}
\newtheorem{thm}{Theorem}
\newtheorem{lem}{Lemma}
\theoremstyle{definition}
\newtheorem*{exa*}{Example}
\crefname{thm}{theorem}{theorems}
\crefname{lem}{lemma}{lemmas}
\newcommand{\bs}{\boldsymbol}
\newcommand{\mb}{\mathbb}
\newcommand{\mc}{\mathcal}
\renewcommand{\mod}{\operatorname{mod}}
\def \a{\alpha} \def \b{\beta} \def \d{\delta} \def \e{\varepsilon} \def \g{\gamma}   \def \s{\sigma} \def \t{\theta} 
\numberwithin{equation}{section}
\renewcommand{\labelenumi}{\setlength{\labelwidth}{\leftmargin}
   \addtolength{\labelwidth}{-\labelsep}
   \hbox to \labelwidth{\theenumi.\hfill}}
\begin{document}
\title{Fractional parts of polynomials over the primes}
\author{Roger Baker}

\address{\rm Dedicated to the memory of Klaus Roth.}

 \begin{abstract}
Let $f$ be a polynomial of degree $k > 1$ with irrational leading coefficient. We obtain results of the form
 \[
\|f(p)\| < p^{-\s}
 \]
for infinitely many primes $p$ that supersede those of Harman (1981, 1983) and Wong (1997).
 \end{abstract}
 
\keywords{fractional parts of polynomials, exponential sums over primes, Harman sieve.}

\subjclass[2010]{Primary 11J54, Secondary 11L20, 11N36}

\maketitle

\section{Introduction}\label{sec1}

For $k \ge 2$, let $\rho_k$ denote the supremum of positive numbers $\nu$ for which
 \[
\|\a p^k + \b\| < p^{-\nu}
 \]
has infinitely many solutions in primes $p$ for every irrational $\a$ and real $\b$. Let $\s_k$ denote the supremum of positive numbers $\nu$ for which
 \begin{equation}\label{eq1.1}
\|f_k(p)\| < p^{-\nu}
 \end{equation}
has infinitely many solutions in primes $p$ whenever $f_k$ is a polynomial of degree $k$ with irrational leading coefficient. (See Matomaki \cite{mat} for the case $k=1$, which presents different features from $k=2, 3, \ldots$ .)

To state our main result we define the integer $J=J(f_k)$ as follows. For
 \begin{align*}
f_k(x) &= \a x^k + \b,\\
J(f_k) &= 2^{k-1}\ (k \le 5), \quad J(f_k) = k(k-1) \quad (k \ge 6)\\
\intertext{For other polynomials of degree $k$, let}
J(f_k) &= 2^{k-1}\ (k \le 7), \quad J(f_k) = 2k(k-1) \quad (k\ge 8).
 \end{align*}

 \begin{thm}\label{thm1}
The inequality \eqref{eq1.1} has infinitely many solutions for
 \begin{align*}
\nu &< \frac 2{13} \quad (k=2)\\[3mm]
\nu &< \frac 1{10} \quad (k=3)\\[3mm]
\nu &< \frac{0.4079}{J(f_k)} \quad (k\ge 4)
 \end{align*}
 \end{thm}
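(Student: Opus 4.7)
The plan is to combine the Harman sieve with sharp bounds on the exponential sums
\[
T(m) = \sum_{p \le N} e(m f_k(p)), \qquad 1 \le |m| \le H,
\]
where $N$ is a large parameter and $H \sim \delta^{-1}$ with $\delta = N^{-\nu}$. A standard Fourier expansion of the indicator of $(-\delta,\delta) \pmod 1$ reduces the count $S(N,\delta) = \#\{p \le N : \|f_k(p)\| < \delta\}$ to $2\delta\,\pi(N)$ plus an error governed by $\sum_{|m|\le H} m^{-1}|T(m)|$. Showing $S(N,\delta) \gg \delta N / \log N$ for arbitrarily large $N$ (which gives the theorem by choosing a sequence $N \to \infty$) thus becomes the question of beating the trivial bound on those weighted exponential sums by a power of $N$.

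To control the sum over primes, I would replace $\sum_p$ by $\sum_n \Lambda(n)$ and apply a Vaughan/Harman decomposition of $\Lambda$ into type~I sums
\[
\sum_{m \le M_1} a_m \sum_{\ell \le N/m} e(m' f_k(m\ell))
\]
and type~II sums
\[
\sum_{M_1 < m \le M_2} a_m \sum_{\ell \sim N/m} b_\ell \, e(m' f_k(m\ell)).
\]
For type~I, treat the inner sum in $\ell$ as a polynomial exponential sum and apply Weyl's inequality (for $k \le 7$) or Vinogradov's mean value theorem (for large $k$), after invoking a Dirichlet approximation to the leading coefficient. For type~II, apply Cauchy--Schwarz in the longer variable, then perform $k-1$ Weyl differencings in the inner variable: the diagonal contribution yields a power saving of order $N^{-1/J}$ with $J = 2^{k-1}$ classically and $J = k(k-1)$ via Vinogradov for large $k$. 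The dichotomy in the definition of $J(f_k)$ comes from the fact that for $f_k = \alpha x^k + \beta$ the constant term disappears on the first differencing and the differenced polynomial depends only on $\alpha$, whereas for a general $f_k$ one must track all intermediate coefficients (effectively doubling the number of differencings needed to isolate a Diophantine condition on $\alpha$), leading to $J = 2k(k-1)$ for $k \ge 8$.

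For $k = 2$ and $k = 3$ the general exponent $0.4079/J(f_k)$ is superseded by specialized treatments: for $k=2$, a single Weyl step renders the polynomial linear in $\ell$, so one can refine the estimates with Kloosterman-type cancellation and a finely tuned Vaughan cutoff to reach $\nu < 2/13$; for $k=3$, a Heath--Brown style decomposition with van der Corput steps pushes the constant up to $1/10$. For $k \ge 4$ one chooses $M_1, M_2$ to balance the type~I bound against the type~II bound so that the two losses coincide; the numerical constant $0.4079$ emerges once the bookkeeping in Harman's sieve (including the major-arc set where $\alpha$ is well-approximable and a separate argument is required) is carried out optimally.

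The main obstacle will be the type~II estimate: extracting the full savings from $k-1$ successive Weyl differencings while keeping the bilinear weights $a_m$ and $b_\ell$ harmless requires a careful choice of which variable to difference in and a sharp treatment of the lower-order terms of $f_k(m\ell)$ after expansion. A secondary difficulty is that the sieve requires both the type~I range $M_1$ and the type~II range $[M_1, M_2]$ to be as large as possible simultaneously; any slack in one immediately degrades the exponent $0.4079$. The major arcs, though classical, also demand care because on them the exponential sum is not small and one must instead exploit the irrationality of $\alpha$ through a Dirichlet-type argument with the denominator $q$ of the approximation playing the role of $H$.
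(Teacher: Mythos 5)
Your sketch gets the broad frame right (prime exponential sums plus a Harman-type sieve) but misidentifies the mechanisms at nearly every decisive step. First, the plan to prove $S(N,\delta)=2\delta\,\pi(N)+\text{error}$ via Fourier expansion of the indicator and a Vaughan-type decomposition of $\Lambda$ is exactly the classical approach that the Harman sieve is designed to beat; it cannot reach $2/13$ for $k=2$ or the $k\ge 4$ exponents stated. The paper instead defines $A=\{N/2<n\le N:\|g(n)\|<L_1^{-1}\}$ and $B=(N/2,N]\cap\mathbb N$, iterates Buchstab to write $\#\{p\in A\}=S_1-S_2-S_3+S_4$, applies the comparison only to $S_1,S_2,S_3$ (throwing away the nonnegative $S_4$), and establishes positivity by verifying a Buchstab-function integral inequality. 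The constant $0.4079$ is forced by that integral (equation (5.3), with threshold $1-2J\rho=0.1842$), not by ``balancing type~I against type~II.'' Relatedly, your ``major arc'' picture is off: $\alpha_k$ is replaced by a convergent $a/q$ at the outset and $N$ is tied to $q$ via $q\asymp(L_1N^k)^{1/2}$, so the entire analysis proceeds with a fixed rational leading coefficient.

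Second, the type~II estimate is not obtained by $k-1$ Weyl differencings. The paper performs a single Cauchy--Schwarz, reduces to a one-variable polynomial exponential sum, and then applies the approximation theorem (Lemma~\ref{lem5}, sourced from Baker's \emph{Diophantine Inequalities} together with \cite{rcb3}), which delivers \emph{simultaneous} rational approximations to all the polynomial coefficients; a Diophantine contradiction with $a/q$ then finishes. For $k\ge 6$ the stronger inputs come from Bourgain--Demeter--Guth via \cite{rcb3}, not classical Vinogradov mean value as you suggest, and that is also where the split $2^{k-1}$ versus $k(k-1)$ versus $2k(k-1)$ in $J(f_k)$ originates: for $\alpha x^k+\beta$ one may take $M>1$ (amplification over moduli) in Lemma~\ref{lem5}, while for a general $f_k$ one is forced to $M=1$. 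This is a different mechanism from your ``extra differencings to track intermediate coefficients.'' Your $k=2$ and $k=3$ heuristics are likewise off target: there are no Kloosterman sums for $k=2$ (the gain comes from a second Cauchy--Schwarz with the $\ell$-sum inside, Lemma~\ref{lem11}, and a pair-counting lemma), and for $k=3$ the gain comes from Hooley's counting estimate for $\|say^3/q\|$ (Lemma~\ref{lem2}), not van der Corput. As written the proposal would not close; you would need to substitute the Harman comparison argument, the simultaneous-approximation machinery of Lemma~\ref{lem5}, and the Section~\ref{sec2} counting lemmas.
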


A few remarks are in order. Our first few results are of the form $\s_2 \ge 2/13$, $\s_3 \ge 1/10$, $\s_4 \ge 0.0509875$. Harman \cite{har3} obtained $\rho_2 \ge 2/13$, so we do not have a new result for the polynomials $\a x^2 + \b$. Wong \cite{wong} obtained lower bounds for $\rho_3, \ldots,\rho_{11}$ with $\rho_3 \ge 5/56 = 0.0892\ldots$ and $\rho_4 \ge 1/21 = 0.0476\ldots$. In \cite{har} Harman shows that $\s_k \ge 1/2^{2k-1}$, so that $\s_2 \ge 1/8$ and $\s_3 \ge 1/32$. Harman \cite{har2} gives improvements for $\s_4, \s_5,\ldots$ including $\s_4 \ge 4/391 = 0.0102\ldots$. Asymptotically, Harman \cite{har2} shows that
 \[
\s_k \ge \frac{1 + o(1)}{12k^2 \log k}\, .
 \]

Our improvements depend on obtaining new `arithmetical information' to use in the Harman sieve \cite{har3, har4}. This amounts to giving upper bounds for trilinear exponential sums, of the form
 \begin{equation}\label{eq1.2}
\sum_{\ell \le L} \ c_\ell \mathop{\sum_{X < x \le 2X}\ a_x \sum_{Y < y \le 2Y}}_{N/2 < xy < N} b_y e(\ell g(mn)) \ll N^{1-\eta},
 \end{equation}
where $L= N^{\rho-\e/3}$ and either $|a_x| \le 1$, $|b_y| \le 1$ (Type II sums), or $|a_x| \le 1$ and $b_y = 1$ identically (Type I sums). The point is to get the estimate over wider ranges than can be found in Baker and Harman \cite{bh}, the present `state of the art' for monomials. Here $g$ is obtained from $f_k$ by replacing its leading coefficient $\a_k$ by $a/q$, a convergent to $\a_k$. Several devices come into play. We give a sharper bound in an auxiliary result on the number of solutions $y \in (Y, 2Y]$ of
 \[
\left\|\frac{say^3}q\right\| < \frac 1Z
 \]
for a given integer $s < q$ by slightly adapting a result of Hooley \cite{hool}. For $k \ge 3$, we give a relatively simple argument that improves the lower bound on $Y$ in \eqref{eq1.2} from $Y \gg L^2N^{2\eta}$ (essentially) to $Y \gg L N^{2\eta}$, in the `Type II' case. For $k \ge 6$, we use stronger results on simultaneous approximation to the coefficients of a large Weyl sum \cite{rcb3} than those available to the authors of \cite{bh}; these simultaneous approximation results depend on the work of Bourgain, Demeter, and Guth \cite{bdg}.

We end this section with remarks on notation. We write `$y \sim Y$' for `$Y \le y < 2Y$'. We write $\langle s_1, \ldots, s_k\rangle$, $[s_1, \ldots, s_k]$ for greatest common divisor and least common multiple. Let $e(\t) = e^{2\pi i\t}$ and $\|x\| = \min\limits_{n\in \mb Z}|x-n|$. Constants implied by `O' and `$\ll$' depend only on $k$, $\e$. We suppose that the positive number $\e$ is sufficiently small and let $\eta = \e/C_1(k)$ where $C_1(k)$ is a suitable large positive constant. Let $a/q$ be a convergent (with $q$ sufficiently large) to the continued fraction of $\a_k$, where $f_k(x) = \a_k x^k + \cdots + \a_1x + \a_0$, and fix $N$ with
 \[
(L_1N^k)^{1/2} \ll q \ll (L_1N^k)^{1/2},
 \]
where $L_1$ denotes $2N^{\rho-\e/2}$ with $\rho$ defined by
 \[
\rho = \frac 2{13}\, (k=2), \ \rho = \frac 1{10}\, (k=3), \ \rho = \frac{0.4079}{J(f_k)}\, (k \ge 4).
 \]
Clearly (much as in \cite{bh}) it suffices to prove that there is a positive number of primes in the set
 \[
A = \left\{\frac N2 < n \le N : \|g(n)\| < L_1^{-1}\right\}.
 \]

Note that our definition of $L$ gives $L > N^{\e/7}L_1$; this `increase' compared to $L_1$ is required at the last stage of \Cref{lem11} below.
\vskip .3in

\section{Small values of a monomial $\pmod q$.}\label{sec2}

Let $1 \le Y < q$, $1 \le D < q$, $Z \ge 2$, $1 \le s < q$. For later use we need to bound the number of solutions of the inequality
 \begin{equation}\label{eq2.1}
\left\|\frac{say^k}q\right\| < \frac 1Z
 \end{equation}
for which $y \in (Y, 2Y]$ and $\langle y, q\rangle \le D$. Denote this number by $\mc N_k(Y, D, Z, s)$.

 \begin{lem}\label{lem1}
 \begin{enumerate}
\item[(i)] With the above notations, we have
 \[
\mc N_k(Y, 1, Z, s) \ll q^{1+\eta} Z^{-1}.
 \]

\item[(ii)] Whenever $s D^k < q$, we have
 \[
\mc N_k(Y, D, Z, s) \ll q^{1+2\eta} Z^{-1}.
 \]
 \end{enumerate}
 \end{lem}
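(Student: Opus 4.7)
The plan is to derive (i) by reducing the inequality to a polynomial congruence modulo a divisor of $q$, and then to deduce (ii) by partitioning $y$ according to the common factor $e = \langle y, q\rangle$ and reducing each piece to an application of (i).

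For (i), the inequality $\|say^k/q\| < 1/Z$ is equivalent to the existence of an integer $r$ with $|r| < q/Z$ satisfying $say^k \equiv r \pmod q$. Since $\langle a, q\rangle = 1$ (as $a/q$ is a convergent in lowest terms), put $d = \langle s, q\rangle = \langle sa, q\rangle$ and write $q = dq_1$, $sa = ds_1$ with $\langle s_1, q_1\rangle = 1$; note $q_1 > 1$ because $s < q$. The congruence forces $d \mid r$, and writing $r = dr_1$ with $|r_1| < q_1/Z$, it reduces to
\[
y^k \equiv s_1^{-1}r_1 \pmod{q_1}, \qquad \langle y, q_1\rangle = 1.
\]
Since $\langle y, q_1\rangle = 1$ and $q_1 > 1$, the value $r_1 = 0$ contributes nothing, so only $r_1 \neq 0$ matters; there are $\ll q_1/Z$ such $r_1$ in range (and none at all if $q_1 < Z$). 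The key arithmetic input is that for each residue class $c \pmod{q_1}$, the congruence $y^k \equiv c \pmod{q_1}$ has at most $k^{\omega(q_1)} \ll q^\eta$ solutions — this is the Hooley-type bound \cite{hool} alluded to in the introduction, slightly sharpened to take advantage of the coprimality restriction. Each residue class modulo $q_1$ contributes $\le Y/q_1 + 1$ values of $y \in (Y, 2Y]$, and summing over the $\ll q_1/Z$ admissible $r_1$ gives
\[
\mc N_k(Y, 1, Z, s) \ll q^\eta \cdot \frac{q_1}{Z}\Bigl(\frac{Y}{q_1} + 1\Bigr) = q^\eta\Bigl(\frac{Y}{Z} + \frac{q_1}{Z}\Bigr) \ll \frac{q^{1+\eta}}{Z},
\]
using $Y \le q$ and $q_1 \le q$.

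For (ii), I would partition the counted $y$ by $e = \langle y, q\rangle$, with $e \mid q$ and $e \le D$. Writing $y = ey'$ with $\langle y', q/e\rangle = 1$ and $y' \in (Y/e, 2Y/e]$, the inequality becomes $\|sae^{k-1}y'^k/(q/e)\| < 1/Z$. The hypothesis $sD^k < q$ together with $\langle a, q\rangle = 1$ implies $q \nmid sae^k$ (since $q \mid sae^k$ would force $se^k \ge q$), and hence $(q/e) \nmid sae^{k-1}$, so the reduced coefficient $s^* \equiv sae^{k-1} \pmod{q/e}$ lies in $[1, q/e)$. Part (i) then applies with modulus $q/e$ in place of $q$ (absorbing the role of the numerator $a$ into $s^*$), giving a count $\ll (q/e)^{1+\eta}/Z$ for each $e$. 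Summing over the $\le \tau(q) \ll q^\eta$ admissible divisors produces
\[
\mc N_k(Y, D, Z, s) \ll q^\eta \cdot \frac{q^{1+\eta}}{Z} = \frac{q^{1+2\eta}}{Z}.
\]

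The main obstacle is twofold. First, justifying the non-vanishing of the reduced coefficient in (ii), which is exactly the role of the hypothesis $sD^k < q$. Second, extracting the $q^\eta$-type bound on the number of $k$-th power roots modulo $q_1$ uniformly in $q_1$ — this is the step where Hooley's argument enters, since the naive $k^{\omega(q_1)}$ estimate must be controlled robustly (in particular when $q_1$ has many small prime factors) so that the additional $q^\eta$ factor from the divisor sum in (ii) is absorbed into the final $q^{1+2\eta}/Z$.
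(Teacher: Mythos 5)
Your argument for (ii) is exactly the paper's: split $y$ according to $d = \langle y,q\rangle \le D$, write $y = y_1 d$, $q = q_1 d$, reduce to $\|s d^{k-1} a y_1^k/q_1\| < Z^{-1}$ with $y_1 \sim Y/d$ coprime to $q_1$, verify from $sD^k < q$ that $sd^{k-1} < q_1$ so (i) is applicable with modulus $q_1$, and sum over the $\ll q^\eta$ divisors. For (i) the paper simply cites \cite[Lemma~6]{bh}, whereas you give an argument; your sketch (reduce to $y^k \equiv s_1^{-1}r_1 \pmod{q_1}$, count residue classes, multiply by $Y/q_1 + 1$, sum over the $\ll q_1/Z$ nonzero $r_1$) is sound, but the bound on the number of $k$-th roots modulo $q_1$ is not Hooley's theorem from \cite{hool} (that enters only in \Cref{lem2}, for $k=3$); it is the elementary fact that a degree-$k$ congruence to a prime-power modulus has $O_k(1)$ coprime solutions, so the count mod $q_1$ is $\ll_k C^{\omega(q_1)} \ll q^\eta$. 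With that attribution corrected, the proposal matches the paper's route.
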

 
 \begin{proof}
For part (i) see \cite[Lemma 6]{bh}. To deduce part (ii) it suffices to show that for $d\mid q$ the number of $y \sim Y$ with \eqref{eq2.1} and $\langle y,q\rangle = d \le D$ is
 \[
\ll q^{1+\eta} Z^{-1}.
 \]
Write $y = y_1d$, $q = q_1d$, $\langle y_1, q_1\rangle = 1$. Then \eqref{eq2.1} implies
 \[
\left\|\frac{sd^{k-1}y_1^k}{q_1}\right\| < Z^{-1}
 \]
and $y_1 \sim \frac Yd < q_1$. Since $sd^{k-1} < q_1$, the desired bound follows from part (i).
 \end{proof}
 
 \begin{lem}\label{lem2}
Let $Y$, $Z$ be positive numbers in $[1, N^3]$. Then
 \begin{align*}
\mc N_3(Y, q, Z, s) \ll Y^{1/2} + N^\eta \Bigg(YZ^{-1/4} &+ Y\left(\frac{\langle s,q\rangle}q\right)^{1/4}\\[2mm]
&\quad + Y^{1/4} q^{1/4} Z^{-1/4}\Bigg).
 \end{align*} 
 \end{lem}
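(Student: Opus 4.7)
The plan is a second-moment argument combining Cauchy--Schwarz, Fourier detection, and Weyl's inequality. Squaring gives
\[
\mathcal{N}_3(Y, q, Z, s)^2 \le \#\{(y_1, y_2) \in (Y, 2Y]^2 : \|sa(y_1^3 - y_2^3)/q\| < 2/Z\},
\]
whose diagonal $y_1 = y_2$ contributes exactly $Y$, producing the $Y^{1/2}$ term on taking the square root at the end.

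For the off-diagonal count, apply a Vaaler majorant $V^+(x) = \sum_{|h| \le H} \widehat{V^+}(h) e(hx)$ of $\mathbf{1}[\|x\| < 2/Z]$, with $\widehat{V^+}(0) \le 4/Z + 1/H$ and $|\widehat{V^+}(h)| \le \min(4/Z, 1/|h|)$ for $h \ne 0$. Using the factorization
\[
\sum_{(y_1, y_2) \in (Y, 2Y]^2} e\bigl(hsa(y_1^3 - y_2^3)/q\bigr) = |T(h)|^2, \quad T(h) := \sum_{y \sim Y} e(hsay^3/q),
\]
the problem reduces to bounding $\sum_{|h| \le H} \widehat{V^+}(h) |T(h)|^2$. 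With $H = Y$, the $h = 0$ contribution is $O(Y^2/Z + Y)$, the $Y$ merging with the diagonal.

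For $h \ne 0$, write $hsa/q = hs_1 a/q_1$ in lowest terms, with $q_1 = q/\langle s, q\rangle$ and $(s_1 a, q_1) = 1$. Weyl's inequality for cubic polynomials yields
\[
|T(h)|^2 \ll Y^{2+\eta}\Bigl(Y^{-1/2} + ((h,q_1)/q_1)^{1/2} + (q_1/((h,q_1)Y^3))^{1/2}\Bigr).
\]
Sum over $h \le H$ weighted by $|\widehat{V^+}(h)|$, splitting at $h \asymp Z$ and further by $d = (h, q_1) \mid q_1$. The divisor sums $\sum_{d \mid q_1} d^{\pm 1/2} \ll N^\eta$ handle the arithmetic bookkeeping, while the crucial $Z^{-1/4}$ savings emerge from combining Parseval's identity $\sum_h |\widehat{V^+}(h)|^2 \ll 1/Z$ with Cauchy--Schwarz against the $h$-sum of the Weyl bound. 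Extracting $\sqrt{\cdot}$ of the final bound on $\mathcal{N}_3^2$ delivers the three $N^\eta$-tagged terms.

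The main obstacle is matching the $Z^{-1/4}$ factor in the last term $Y^{1/4} q^{1/4} Z^{-1/4}$: a direct Weyl bound only yields $Y^{1/4}(q/\langle s, q\rangle)^{1/4}$, which is not uniformly $\le Y^{1/4} q^{1/4} Z^{-1/4}$. The Hooley-style adaptation---a Cauchy--Schwarz step against $(\sum_h |\widehat{V^+}(h)|^2)^{1/2}$ combined with fine bookkeeping of the divisor structure on $(h, q_1)$---is precisely what recovers this saving. The trivial Weyl contribution $Y^{-1/2}$ inside the parenthesis also produces a $Y^{3/4}$ term that must be absorbed into $YZ^{-1/4}$, which is automatic for $Z \le Y$ but requires a separate control via $|\widehat{V^+}(h)| \le 4/Z$ in the regime $Z > Y$.
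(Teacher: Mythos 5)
The paper's proof of this lemma is a two-line reduction: for $\langle s,q\rangle=1$ the statement \emph{is} Hooley's Theorem 1 from \cite{hool}, quoted verbatim, and the general case follows by writing $d=\langle s,q\rangle$, $s=s_1d$, $q=q_1d$, so that $say^3/q=s_1ay^3/q_1$ with $\langle s_1a,q_1\rangle=1$; applying the coprime case at modulus $q_1$ and noting $q_1^{-1/4}=(\langle s,q\rangle/q)^{1/4}$, $q_1\le q$, gives exactly the stated bound. You never invoke Hooley's theorem and instead try to re-derive it from scratch, which is a much more ambitious route.

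Your second-moment framework (square, detect $\|sa(y_1^3-y_2^3)/q\|<2/Z$ with a Vaaler majorant, factor into $|T(h)|^2$, apply Weyl) is the natural first attempt, but it does not produce Hooley's bound, and you in fact flag the failure yourself before hand-waving past it. The issue is precisely the factor $Z^{-1/4}$ in the term $Y^{1/4}q^{1/4}Z^{-1/4}$. Your proposed repair --- Cauchy--Schwarz against $\bigl(\sum_h|\widehat{V^+}(h)|^2\bigr)^{1/2}\ll Z^{-1/2}$ --- is not free: it costs you $\bigl(\sum_h|T(h)|^4\bigr)^{1/2}$, and Weyl bounds the individual $|T(h)|$, not its fourth moment over $h$. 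Even taking the optimal Hua-type fourth-moment estimate $\sum_{h\le q_1}|T(h)|^4\ll q_1Y^{2+\epsilon}$ (the complete sum counts $y_1^3+y_2^3\equiv y_3^3+y_4^3\pmod{q_1}$) you would get $\mathcal N_3^2\ll Z^{-1/2}q_1^{1/2}Y^{1+\epsilon}$, hence $\mathcal N_3\ll Y^{1/2}q^{1/4}Z^{-1/4}N^\eta$, which is a factor $Y^{1/4}$ weaker than the target. The simultaneous saving in $Y$, $q$, and $Z$ is exactly the substance of Hooley's theorem; it is not a consequence of Weyl plus Parseval plus divisor bookkeeping. (The regime $Z>Y$ where the trivial Weyl term $Y^{3/4}$ dominates $YZ^{-1/4}$ is likewise left unresolved.) As written, the argument has a genuine gap at the single step that matters; the clean way to prove the lemma is to do what the paper does and invoke Hooley's Theorem 1 together with the gcd reduction.
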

 
 \begin{proof}
In the case $\langle s,q\rangle=1$, this follows from Hooley \cite[Theorem 1]{hool}. For the general case, we rewrite \eqref{eq2.1} as
 \[
\left\|\frac{s_1ay^3}{q_1}\right\| < \frac 1Z
 \]
where $d = \langle s,q\rangle$, $s = s_1d$, $q = q_1d$, $\langle s_1, q_1\rangle = 1$.
 \end{proof}
 
 \begin{lem}\label{lem3}
 \begin{enumerate}
\item [(i)] Let $s$, $Y$ be positive integers less than $q$ and let $Z \ge 2$. Then
 \[
\mc N_2(Y, 1, Z, s) \ll q^\eta \left(\frac{Y + q^{1/2}}{Z^{1/2}}\right).
 \]

\item[(ii)] Let $D \ge 1$. Whenever $s D^2 < q$, in addition to the above hypotheses, we have
 \[
\mc N_2(Y, D, Z, s) \ll q^{2\eta} \left(\frac{Y + q^{1/2}} {Z^{1/2}}\right).
 \]
 \end{enumerate}
 \end{lem}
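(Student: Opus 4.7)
The plan is to establish part (i) by a differencing argument after reducing to the coprime case $\langle s, q\rangle = 1$, and then to deduce part (ii) by splitting $y = dy_1$ according to $d = \langle y, q\rangle$.

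For part (i), I first reduce to $\langle s, q\rangle = 1$. Writing $d = \langle s, q\rangle$, $s = ds'$, $q = dq'$, the identity $say^2/q = s'ay^2/q'$ shows that the condition $\|say^2/q\| < 1/Z$ is unchanged on passing from modulus $q$ to modulus $q'$; moreover $\langle y, q\rangle = 1$ entails $\langle y, q'\rangle = 1$, and $\langle s'a, q'\rangle = 1$ since $\langle a, q\rangle = 1$. Any bound of the form $\ll q'^\eta(Y + q'^{1/2})/Z^{1/2}$ for the modulus $q'$ is therefore dominated by the target. So assume henceforth that $\langle sa, q\rangle = 1$, and set $M = \mc N_2(Y, 1, Z, s)$. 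Squaring and extracting the diagonal gives $M^2 \le M + 2N$, where $N$ counts ordered pairs $y_1 > y_2$ in $(Y, 2Y]$ each satisfying the small-fractional-part condition. Any such pair yields $sa(y_1-y_2)(y_1+y_2) \equiv u \pmod q$ for some integer $u$ with $0 < |u| < 2q/Z$, so $vw \equiv c \pmod q$ with $v = y_1 - y_2 \in [1, Y)$, $w = y_1 + y_2 \in (2Y, 4Y)$, and $c$ ranging over a set of size $\ll q/Z$. The positive integer $vw$ lies in $(2Y, 4Y^2)$, so for each $c$ the number of integers there in the class $c \pmod q$ is $\ll Y^2/q + 1$; each such integer $n$ admits $\ll q^\eta$ factorisations $n = vw$ with $v \le Y$, by the divisor bound. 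Summing yields
\[
N \ll q^\eta \cdot \frac{q}{Z}\left(\frac{Y^2}{q} + 1\right) \ll q^\eta\left(\frac{Y^2}{Z} + \frac{q}{Z}\right),
\]
and then $M^2 \le M + 2N$ gives $M \ll q^\eta(Y + q^{1/2})Z^{-1/2}$ after renaming $\eta$.

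Part (ii) I deduce from part (i) by the standard decomposition. For each $d \mid q$ with $d \le D$, the elements $y \in (Y, 2Y]$ with $\langle y, q\rangle = d$ are $y = dy_1$ with $\langle y_1, q_1\rangle = 1$, $q_1 = q/d$, $y_1 \in (Y/d, 2Y/d]$, and the condition $\|say^2/q\| < 1/Z$ reads $\|(sad)y_1^2/q_1\| < 1/Z$. The hypothesis $sD^2 < q$ gives $sd^2 < q$ and hence $sd < q_1$, so $sad \bmod q_1$ is a positive integer less than $q_1$ and part (i) applies to yield a count $\ll q_1^\eta(Y/d + q_1^{1/2})Z^{-1/2}$. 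Summing over $d \mid q$ with $d \le D$, and using $\sum_{d \mid q} d^{-s} \le \tau(q) \ll q^\eta$ for $s \in \{1/2, 1\}$, the stated bound $\ll q^{2\eta}(Y + q^{1/2})/Z^{1/2}$ follows.

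The main obstacle is the off-diagonal count in part (i): when $Y \gg q^{1/2}$ the product $vw$ can wrap around $\sim Y^2/q$ multiples of $q$, and one must check that the divisor bound applied uniformly to each of these integer representatives preserves the clean split into $Y^2/Z$ and $q/Z$ rather than inflating to something like $qY/Z$.
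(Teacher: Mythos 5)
The paper disposes of part (i) by citation---``For (i), see \cite[Lemma 9]{bh}''---and obtains (ii) from (i) by exactly the reduction $y = dy_1$, $q = dq_1$ that you use, which is the same device as in the paper's proof of Lemma 1. Your treatment of (ii) therefore coincides with the paper's, including the key observation that $sD^2 < q$ forces $sd < q_1$ so the reduced problem still meets the hypotheses of (i). For (i) you give a self-contained proof by the standard squaring-and-divisor argument instead of citing \cite{bh}; this is a legitimate and well-known route. One small slip: after extracting the diagonal you assert that every off-diagonal pair produces $u$ with $0 < |u| < 2q/Z$, but $u = 0$ is genuinely possible when $\langle y_1y_2, q\rangle = 1$ and $y_1/y_2$ is a nontrivial square root of unity modulo $q$. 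This does not damage the bound: either admit $c = 0$ among the $\ll q/Z$ residue classes (the count of integers $n\equiv 0 \pmod q$ in $(2Y, 4Y^2)$ is again $\ll Y^2/q + 1$), or note that the number of off-diagonal pairs with $u=0$ is $\ll q^{\eta}M$ (each $y_2$ determines $y_1$ modulo $q$ up to $\ll q^{\eta}$ square roots of unity, and $Y < q$ leaves at most one representative per class in $(Y,2Y]$), which is harmless when fed into $M^2 \ll q^{\eta}M + N$. With that repaired, the argument is correct; you also correctly handle the reduction to $\langle sa,q\rangle = 1$ and the absorption of the $O(1)$ term by noting $Z \le q'$ in the nontrivial range.
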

 
 \begin{proof}
For (i), see \cite[Lemma 9]{bh}. We deduce (ii) from (i) by an argument used in proving \Cref{lem1}.
 \end{proof}

Our next task is to `allow $s$ to vary' in the counting performed in Lemmas \ref{lem1}--\ref{lem3}. For $S_0 \ge 1$, $S_1 \ge 1$ it is convenient to write
 \begin{align*}
\mc A&(S_0, S_1, d_0, d_1)\\
&\qquad =\{s = s_0s_1 : \langle s_0,s_1\rangle = 1, s_0 \text{ square-full}, S_0 < s_0 \le 2S_0,\\
&\hskip 1.3in s_1 \text{ square-free, } S_1 < s_1 \le 2S_1, d_0\mid s_0, d_1\mid s_1\}
 \end{align*}
whenever $d_0$, $d_1$ are positive integers.

 \begin{lem}\label{lem4}
For $S_0 \le N$, $S_1\le N$, 
 \[
\#\mc A(S_0, S_1, d_0, d_1) \ll N^\eta S_0^{1/2} S_1 d_0^{-1/2} d_1^{-1}.
 \]
 \end{lem}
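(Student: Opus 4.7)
I would start by discarding the coprimality condition $\langle s_0,s_1\rangle = 1$ (which only enlarges the count and so preserves the upper bound):
\[
\#\mc A(S_0,S_1,d_0,d_1) \le R_0 \cdot R_1,
\]
where $R_1$ counts squarefree $s_1 \sim S_1$ with $d_1 \mid s_1$ and $R_0$ counts squarefull $s_0 \sim S_0$ with $d_0 \mid s_0$. The bound $R_1 \ll S_1/d_1$ is standard: the number of multiples of $d_1$ in $(S_1,2S_1]$ is at most $S_1/d_1 + 1$, and the $+1$ is absorbed since $R_1 = 0$ unless $d_1 \le 2S_1$, in which case $S_1/d_1 \ge 1/2$. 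The task reduces to proving
\[
R_0 \ll N^\eta S_0^{1/2} d_0^{-1/2}.
\]

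The key observation is that although $d_0$ itself need not be squarefull, the squarefullness of $s_0$ automatically promotes any prime $p \mid d_0$ with $v_p(d_0) = 1$ to $v_p(s_0) \ge 2$. Thus $d_0^\dagger \mid s_0$, where $d_0^\dagger = [d_0,\operatorname{rad}(d_0)^2]$ is the smallest squarefull multiple of $d_0$; in particular $d_0^\dagger \ge d_0$. Every $s_0$ counted by $R_0$ factorises uniquely as
\[
s_0 = d_0^\dagger\, m_1\, m_2, \qquad \operatorname{rad}(m_1) \mid \operatorname{rad}(d_0), \quad \langle m_2, d_0\rangle = 1, \quad m_2 \text{ squarefull or } 1,
\]
the restriction on $m_2$ being forced by $s_0$ squarefull together with $\langle m_2,d_0\rangle = 1$. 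Invoking the classical bound $\#\{m \le Y : m \text{ squarefull}\} \ll Y^{1/2}$ for the inner count of $m_2$ in a dyadic range,
\[
R_0 \ll \sum_{\substack{\operatorname{rad}(m_1)\mid \operatorname{rad}(d_0)\\ m_1 \le 2S_0/d_0^\dagger}}\left(\frac{S_0}{d_0^\dagger m_1}\right)^{1/2} \le \left(\frac{S_0}{d_0^\dagger}\right)^{1/2}\prod_{p \mid d_0}\bigl(1 - p^{-1/2}\bigr)^{-1}.
\]
Each Euler factor is at most $(1-2^{-1/2})^{-1} < 4$, and $\omega(d_0) \ll \log N/\log\log N$, so the product is $\le 4^{\omega(d_0)} = N^{O(1/\log\log N)} \ll N^\eta$. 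Combined with $d_0^\dagger \ge d_0$, this yields $R_0 \ll N^\eta (S_0/d_0)^{1/2}$, and multiplying by $R_1$ delivers the lemma.

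The only genuinely delicate step is the passage from $d_0 \mid s_0$ to $d_0^\dagger \mid s_0$, together with the parametrisation $s_0 = d_0^\dagger m_1 m_2$ that it unlocks. Once this observation is in place, the proof collapses to a one-line Euler-product estimate on top of a textbook count of squarefull integers in a dyadic interval.
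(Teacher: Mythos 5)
Your proof is correct, and it diverges from the paper's at the key counting step. Both arguments decompose $s_0$ as (essentially) $d_0 \cdot (\text{residual part with } \operatorname{rad} \mid \operatorname{rad}(d_0)) \cdot (\text{squarefull part coprime to } d_0)$, and both trivially dispose of $s_1$. The difference is in how the residual factor is controlled. The paper bounds the \emph{number} of admissible residual factors $u$ by $\ll N^\eta$: it notes that any such $u \le 2S_0/d_0$ is built from the primes dividing $d_0$, replaces those primes by the first $t$ primes to bound this count by $\Psi(2N, 2\log N)$, and appeals to de Bruijn's theorem on smooth numbers; it then multiplies by the crude bound $\ll (S_0/d_0)^{1/2}$ on the coprime squarefull part, ignoring the joint size constraint. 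You instead retain the joint constraint, so that each residual factor $m_1$ cuts the range of the squarefull cofactor down to $2S_0/(d_0^\dagger m_1)$; the resulting weight $m_1^{-1/2}$ makes the sum over $m_1$ converge to the Euler product $\prod_{p\mid d_0}(1-p^{-1/2})^{-1}$, which is $\ll 4^{\omega(d_0)} \ll N^\eta$. This trades de Bruijn's theorem for a one-line Euler-product computation, which is more elementary and self-contained. (Your introduction of $d_0^\dagger$ is a clean but inessential refinement; the paper's decomposition $s_0 = d_0 u v$ would feed into the same Euler-product argument with the trivial weakening $d_0^\dagger \to d_0$.) The surrounding details — the absorption of the $+1$ in the $s_1$ count, the squarefull count $\ll Y^{1/2}$, and $\omega(d_0)\ll \log N/\log\log N$ — are all standard and correctly handled.
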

 
 \begin{proof}
The number of possible $s_1$ here is $\ll S_1 d_1^{-1}$. Any $s_0$ occurring can be written
 \begin{equation}\label{eq2.2}
s_0 = d_0uv
 \end{equation}
where $p\mid u$ implies $p\mid d_0$, and $v$ is squarefull and relatively prime to $d_0$.

Obviously, $v \le 2S_0/d_0$, so there are $O(S_0^{1/2} d_0^{-1/2})$ possible $v$.

It remains to show that $H$, the number of different $u$ that can occur in \eqref{eq2.2}, is $O(N^\eta)$.

Now, writing $p_1 < \cdots < p_t$ for the prime divisors of $d_0$, we find that $H$ is at most equal to the number of tuples $(m_1, \ldots, m_t)$, $m_i$ a non-negative integer, with
 \begin{equation}\label{eq2.3}
m_1\log p_1 + \cdots + m_t \log p_t \le \log (2S_0).
 \end{equation}
A little thought (replace $p_1, \ldots, p_t$ in \eqref{eq2.3} by the first $t$ primes $q_1,\ldots, q_t$) shows that
 \[
H \le \Psi(2S_0, q_t),
 \]
in the usual notation for smooth numbers. Since $q_t < (1 + \e) t\log t$ if $t$ is large, and $t < (1+\e) \log N/\log\log N$, we have
 \[
H \le \Psi(2N, 2\log N).
 \]
An appeal to Theorem 1 of de Bruijn \cite{dbr} now yields
 \[
H \ll N^\eta,
 \]
and the lemma follows.
 \end{proof}

Let
 \begin{align*}
\mc M_k(Y, Z, S_0, S_1) &= \#\Bigg\{y \sim Y: \langle y, q\rangle 
 \le N^\rho, \left\|\frac{say^k}q\right\| < \frac 1Z\\[2mm]
&\qquad \text{for some } s \in \mc A(S_0, S_1, 1, 1)\Bigg\}.
 \end{align*}
Summing over $s$ in \Cref{lem1} and \Cref{lem3}, we obtain
 \begin{equation}\label{eq2.4}
\mc M_k(Y, Z, S_0, S_1) \ll N^{3\eta} S_0^{1/2} S_1\, \frac qZ
 \end{equation}
whenever $1 \le Y < q$, $Z \ge 2$ and
 \begin{equation}\label{eq2.5}
4S_0 S_1 N^{k\rho} < q, 
 \end{equation}
while under the same conditions on $Y$, $Z$, $S_0$, $S_1$,
 \begin{equation}\label{eq2.6}
\mc M_2(Y, Z, S_0, S_1) \ll N^{3\eta} S_0^{1/2} S_1(Y + q^{1/2}) Z^{-1/2}.
 \end{equation}
To obtain a bound for $\mc M_3(Y, Z, S_0, S_1)$, we restrict $s = s_0s_1$ to values with $\langle s_0, q\rangle = d_0$, $\langle s_1, q\rangle = d_1$, at a cost of a factor $O(N^\eta)$. Now \Cref{lem2} and \Cref{lem4} together yield
 \begin{equation}\label{eq2.7}
\mc M_3(Y, Z, S_0, S_1) \ll N^{3\eta} S_0^{1/2} S_1(Y^{1/2} + Y Z^{-1/4} + Y q^{-1/4} + Y^{1/4} q^{1/4} Z^{-1/4})
 \end{equation}
(the factor $(d_0d_1)^{1/4}$ in the third term in the bound in \Cref{lem2} is cancelled by the factor $d_0^{-1/2}d_1^{-1}$ in \Cref{lem4}).

In our applications we shall always have \eqref{eq2.5}. If we assume this additional condition for $k=3$, there are no solutions of
 \[
\left\|\frac{say^3}q\right\| < \frac 1q \quad \text{with} \ \langle y,q\rangle \le N^\rho
 \]
counted in \eqref{eq2.7}. So we may suppose that $Z < q$, and we obtain
 \begin{equation}\label{eq2.8}
\mc M_3(Y, Z, S_0, S_1) \ll N^{3\eta} S_0^{1/2} S_1(Y^{1/2} + Y Z^{-1/4} + Y^{1/4} q^{1/4} Z^{-1/4}).
 \end{equation}
 \vskip .3in

\section{Type I sums}\label{sec3}

Our most basic tool is obtained by combining Theorem 5.1 of \cite{rcb1} (with a correction in \cite{rcb2}) and Theorem 4 of \cite{rcb3}.
 
 \begin{lem}\label{lem5}
Let $f$ be a polynomial of degree $k$, $f(x) = \g_kx^k + \cdots + \g_1x + \g_0$. Let $M \ge 1$ and $X \ge 1$, with $M = 1$ when $J = J(f) \ne 2^{k-1}$. Suppose that for some subinterval $I$ of $[\frac X2, X]$ we have
 \[
 \sum_{m=1}^M \Bigg|\sum_{x\in I} e(mf(x))\Bigg|
 \ge P \ge MX^{1-1/J + \eta}.
 \]
Then there are natural numbers $s$ and integers $u_1, \ldots, u_k$ with $\langle s, u_1, \ldots, u_k\rangle = 1$; $\langle s, u_2,\ldots, u_k\rangle \le MX^\eta$, in the case $J(f) = 2^{k-1}$;
 \begin{gather*}
s \ll (MXP^{-1})^k X^\eta,\\[2mm]
|s\g_j - u_j| \ll M^{-1}(MXP^{-1})^kX^{\eta-j} \quad (j = 1, 2, \ldots, k).
 \end{gather*}
 \end{lem}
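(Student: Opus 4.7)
My plan is to derive \Cref{lem5} by splitting into two cases according to whether $J(f) = 2^{k-1}$ or not, and invoking the two cited theorems separately. In the case $J(f) \neq 2^{k-1}$, the side-condition on $M$ in the hypothesis forces $M = 1$, so the assumption reduces to
\[
\Bigl|\sum_{x\in I} e(f(x))\Bigr| \ge P \ge X^{1-1/J+\eta},
\]
and Theorem~4 of \cite{rcb3} directly supplies a natural number $s$ and integers $u_1, \ldots, u_k$ with $s \ll (X/P)^k X^\eta$ and $|s\g_j - u_j| \ll (X/P)^k X^{\eta-j}$. The coprimality condition $\langle s, u_1, \ldots, u_k\rangle = 1$ is arranged by dividing the whole tuple by its gcd, which only improves both bounds. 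This is the range in which the decoupling inequality of Bourgain, Demeter, and Guth \cite{bdg} enters, yielding the smaller exponents $J = k(k-1)$ for monomials and $J = 2k(k-1)$ in general.

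For $J(f) = 2^{k-1}$ I would use Theorem~5.1 of \cite{rcb1} (with the correction of \cite{rcb2}), preceded by a pigeonhole step to eliminate the outer sum over $m$. Since $\sum_{m=1}^{M} |\sum_{x\in I} e(mf(x))| \ge P$, there is an integer $m_0 \in [1, M]$ with
\[
\Bigl|\sum_{x\in I} e(m_0 f(x))\Bigr| \ge \frac{P}{M} \ge X^{1-1/J+\eta},
\]
which is the hypothesis of Theorem~5.1 applied to the polynomial $m_0 f$, whose coefficients are $m_0\g_j$. That theorem produces a denominator $s'$ and integers $w_j$ with
\[
s' \ll (MXP^{-1})^k X^\eta, \qquad |s'\,m_0\g_j - w_j| \ll (MXP^{-1})^k X^{\eta-j}.
\]
To recast in the form required by the lemma, I set $s = s'$ and $u_j = w_j/m_0$ whenever that division is integral; then the factor $m_0 \le M$ appears as a divisor of the approximation error, yielding exactly the improvement $M^{-1}$ in the bound on $|s\g_j - u_j|$. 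When some $w_j$ fails to be divisible by $m_0$, the residual factor $m_0$ instead has to be absorbed into $s$, and this is the source of the weakened gcd bound $\langle s, u_2, \ldots, u_k\rangle \le MX^\eta$ asserted only in this case. A final normalization by $\langle s, u_1, \ldots, u_k\rangle$ enforces the full coprimality condition.

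The main obstacle is the careful bookkeeping of the pigeonhole factor $m_0$: a naive implementation would let $m_0$ enter the denominator $s$ multiplicatively and produce the worse bound $s \ll M(MXP^{-1})^k X^\eta$, destroying the symmetry between $s$ and $|s\g_j - u_j|$ needed for applications. Avoiding this loss is exactly the content of the correction \cite{rcb2}, and explains why the lemma allows $\langle s, u_2, \ldots, u_k\rangle \le MX^\eta$ rather than the clean $\langle s, u_2, \ldots, u_k\rangle = 1$, and why the linear coefficient $\g_1$ is singled out: the differencing argument underlying Theorem~5.1 controls the higher-degree coefficients jointly but cannot fold $\g_1$ into the same simultaneous approximation on the same terms.
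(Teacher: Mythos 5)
The paper gives no written proof of \Cref{lem5}; it simply attributes the statement to a combination of Theorem~5.1 of \cite{rcb1} (corrected in \cite{rcb2}) for the case $J(f) = 2^{k-1}$ and Theorem~4 of \cite{rcb3} for the other case. Your treatment of $J(f) \neq 2^{k-1}$ is correct and is exactly what the attribution points to: with $M=1$ the hypothesis is the hypothesis of Theorem~4 of \cite{rcb3}, and the gcd normalization is harmless.

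The $J(f) = 2^{k-1}$ case is where your reconstruction has a real gap. You pigeonhole the outer sum to obtain a single $m_0\le M$ with $|\sum_{x\in I}e(m_0 f(x))| \ge P/M$, and then try to convert approximations to $m_0\g_j$ back into approximations to $\g_j$. Tracing the arithmetic: a single-$m$ theorem applied at threshold $P/M$ gives $s'\ll (MXP^{-1})^kX^\eta$ and $|s'm_0\g_j - w_j|\ll (MXP^{-1})^kX^{\eta-j}$. If you take $s = s'm_0$ you get $|s\g_j - w_j|\ll (MXP^{-1})^kX^{\eta-j}$ with \emph{no} $M^{-1}$ saving and $s\ll M(MXP^{-1})^kX^\eta$, too large. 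Your proposed repair, setting $s=s'$ and $u_j = w_j/m_0$ ``whenever that division is integral,'' does not constitute an argument: the $w_j$ produced by the theorem have no reason to be divisible by $m_0$, $m_0$ might well be $1$ (so the saving vanishes even when divisibility holds), and ``absorbing the residual factor into $s$'' is just the untenable $s=s'm_0$ option under another name. The $M^{-1}$ improvement and the bound on $s$ cannot both be extracted from a single extremal $m_0$; they come from the fact that Theorem~5.1 of \cite{rcb1} is stated \emph{directly for the averaged sum} $\sum_{m\le M}$, and its proof exploits the multitude of $m$ for which the inner Weyl sum is large, together with Weyl differencing, to pin down a single common denominator $s$ serving all those $m$ simultaneously. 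That is also why the weakened condition $\langle s,u_2,\ldots,u_k\rangle\le MX^\eta$ appears: it is an intrinsic feature of the differenced/averaged argument in \cite{rcb1} (the linear coefficient $\gamma_1$ is killed by differencing), not a by-product of a pigeonhole or of the correction in \cite{rcb2}, and your explanation of the role of \cite{rcb2} is speculation unsupported by the structure of the argument. The correct route here is simply to quote Theorem~5.1 of \cite{rcb1} as stated, with the $M$-average already built in, rather than reconstruct it from a single-$m$ statement.
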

 
 \begin{lem}\label{lem6}
Let $u \in \mb Z$, $d \in \mb N$, $B \ge 1$, $L \ge 1$. Let $\mc N$ be the number of solutions of
 \[
\ell u \equiv b\pmod d \quad (1 \le \ell \le L, 1\le b \le B)
 \]
Then
 \[
\mc N \le \min(L, B) + \frac{BL}d\, .
 \] 
 \end{lem}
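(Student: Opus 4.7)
The plan is to establish two separate bounds on $\mathcal{N}$ by fixing in turn each of the two variables $\ell$ and $b$, and then take the minimum of the resulting ``extra'' terms. The argument is purely elementary; I do not anticipate any real obstacle, since the whole estimate reduces to counting representatives of residue classes in a short interval.

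For the first bound, I would fix $\ell \in [1, L]$. The congruence $\ell u \equiv b \pmod{d}$ pins $b$ down to a single residue class modulo $d$, and the number of integers of $[1, B]$ lying in a prescribed residue class modulo $d$ is at most $\lceil B/d \rceil \le B/d + 1$. Summing over $\ell$ yields $\mathcal{N} \le BL/d + L$.

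For the second bound, I would instead fix $b$ and set $g = \langle u, d\rangle$. Writing $u = gu'$, $d = gd'$ with $\langle u', d'\rangle = 1$, the congruence $\ell u \equiv b \pmod{d}$ admits no solution unless $g \mid b$; when $g \mid b$ it reduces to a single linear congruence $\ell u' \equiv b/g \pmod{d'}$, so $\ell$ is uniquely determined modulo $d/g$. Hence the number of admissible $\ell \in [1, L]$ is at most $\lceil Lg/d \rceil \le Lg/d + 1$. Since the count of $b \in [1, B]$ with $g \mid b$ is at most $B/g$, multiplying gives
\[
\mathcal{N} \le \frac{B}{g}\left(\frac{Lg}{d} + 1\right) = \frac{BL}{d} + \frac{B}{g} \le \frac{BL}{d} + B.
\]

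Combining the two bounds, $\mathcal{N} \le BL/d + L$ and $\mathcal{N} \le BL/d + B$, I obtain $\mathcal{N} \le BL/d + \min(L, B)$, which is the desired inequality. Both steps use only the elementary fact that an interval of length $X$ contains at most $\lceil X/m \rceil \le X/m + 1$ terms of any arithmetic progression with common difference $m$, so no further machinery is required.
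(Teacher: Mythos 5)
Your proof is correct and is essentially identical to the paper's: both obtain $\mathcal N \le BL/d + B$ by fixing $b$ (using $\langle u,d\rangle \mid b$ and counting $\ell$ modulo $d/\langle u,d\rangle$), obtain $\mathcal N \le BL/d + L$ by fixing $\ell$, and take the minimum. The only difference is the order in which you present the two bounds.
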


 \begin{proof}
The congruence has no solution unless $\langle u, d\rangle\mid b$. For fixed $b$, the number of possibilities for $\ell\left(\mod \frac d{\langle u, d\rangle}\right)$ is at most 1. Hence
 \[
\mc N \le \frac B{\langle u, d\rangle}\, \left(\frac{L\langle u, d\rangle}d + 1\right) \le \frac{BL}d + B.
 \]
On the other hand, for given $\ell$, the number of possible $b$ is at most $\frac Bd + 1$. This gives the alternative upper bound $\frac{BL}d + L$.
 \end{proof}
 
 \begin{lem}\label{lem7}
Let $k \ge 2$. Let $f(x) = \g_k x^k + \cdots + \g_1 x$. Let $X \ge 1$, $1 \le L\le X$. Suppose there are integers $s$, $u_1, \ldots, u_k$, $s \le X$, $\langle s, u_1, \ldots, u_k\rangle = 1$, and if $J(f) = 2^{k-1}$, $\langle s, u_2, \ldots, u_k\rangle \ll LN^\eta$, such that
 \begin{equation}\label{eq3.1}
|s\g_j - u_j| \le (2k^2)^{-1} L^{-1} X^{1-j}\quad (1 \le j \le k). 
 \end{equation}
Let
 \begin{gather*}
\b_j = \g_j - \frac{u_j}s, \ F(x) = \sum_{j=1}^k \b_j x^j, \ G(x)= \sum_{j=1}^k u_jx^j,\\[2mm]
S(s, \ell G) = \sum_{v=1}^s e\left(\frac{\ell G(v)}s\right). 
 \end{gather*}
Then we have, for any subinterval $I$ of $\left[\frac X2, X\right]$,
 \begin{align}
\sum_{\ell = 1}^L \Bigg| \sum_{n\in I} e(\ell f(x)) &- s^{-1} S(s, \ell G) \int_I e(\ell F(z)) dz\Bigg|\\[4mm]
&\ll \begin{cases}
 N^{2\eta} Ls^{1-1/k} & \text{if } J(f) = 2^{k-1}\label{eq3.2}\\[2mm]
 N^{2\eta}(Ls^{1-1/k}+s) & \text{otherwise}.
 \end{cases}\notag
 \end{align}
 \end{lem}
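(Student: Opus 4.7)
The plan is to carry out the standard major-arc decomposition. Write each $n \in I$ in the form $n = v + sm$ with $1 \le v \le s$ and $m \in \mb Z$; since $G$ has integer coefficients, $G(v+sm) \equiv G(v) \pmod s$, so
\begin{equation*}
\sum_{n \in I} e(\ell f(n)) = \sum_{v=1}^{s} e\!\left(\tfrac{\ell G(v)}{s}\right) T_v(\ell), \qquad T_v(\ell) = \sum_{\substack{m \in \mb Z \\ v+sm \in I}} e(\ell F(v+sm)).
\end{equation*}
Apply Poisson summation to $T_v(\ell)$; the change of variables $z = v + sm$ yields
\begin{equation*}
T_v(\ell) = \frac{1}{s} \sum_{\mu \in \mb Z} e(\mu v/s) \int_I e\!\left(\ell F(z) - \tfrac{\mu z}{s}\right) dz.
\end{equation*}
The $\mu = 0$ contribution, summed against $e(\ell G(v)/s)$ over $v$, produces exactly $s^{-1} S(s, \ell G) \int_I e(\ell F(z))\,dz$, the main term of the lemma. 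What remains is
\begin{equation*}
\mathrm{Err}(\ell) = \frac{1}{s}\sum_{\mu \ne 0} T^\sharp_\mu(\ell)\, I_\mu(\ell),
\end{equation*}
with $T^\sharp_\mu(\ell) = \sum_{v=1}^s e\!\bigl((\ell G(v) + \mu v)/s\bigr)$ and $I_\mu(\ell) = \int_I e(\ell F(z) - \mu z/s)\,dz$.

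Two standard estimates feed into the bound. The approximation hypothesis $|\b_j| \le (2k^2 L s)^{-1} X^{1-j}$ gives $|\ell F'(z)| \le (2s)^{-1}$ on $I \subset [X/2, X]$ for every $\ell \le L$, so the phase in $I_\mu(\ell)$ has derivative at least $|\mu|/(2s)$ for $|\mu| \ge 1$; since $|\ell F''| \ll 1/(sX)$ the derivative is effectively monotone, and the first-derivative test gives $|I_\mu(\ell)| \ll s/|\mu|$. For $T^\sharp_\mu(\ell)$, the translate $\mu v$ only perturbs the linear coefficient of $\ell G$, leaving the higher coefficients $\ell u_k, \ldots, \ell u_2$ intact; in the case $J(f) = 2^{k-1}$, the hypothesis $\langle s, u_2, \ldots, u_k \rangle \ll L N^\eta$ together with $\ell \le L$ keeps the relevant gcd with $s$ small, and Weyl's inequality (or Hua's lemma) supplies the uniform bound $|T^\sharp_\mu(\ell)| \ll s^{1 - 1/k + \eta}$. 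Combining,
\begin{equation*}
|\mathrm{Err}(\ell)| \ll \frac{s^{1-1/k+\eta}}{s} \sum_{1 \le |\mu| \le s} \frac{s}{|\mu|} \ll N^{2\eta}\, s^{1-1/k},
\end{equation*}
and summing over $\ell \in [1, L]$ recovers the bound in the first case.

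The main obstacle is that the Fourier transform of the sharp cutoff $\chi_I$ decays only like $1/|\mu|$, so the Poisson series is at best conditionally convergent; I would handle this either by smoothing $\chi_I$ on unit scale (introducing $O(1)$ boundary corrections per $v$, totalling $O(s)$ per $\ell$) or by a second integration by parts on $I_\mu(\ell)$ that gives $O(s^2/\mu^2)$ decay beyond $|\mu| \gg s$, either way reducing the series to an essentially finite sum whose $\log$ is absorbed in $N^{\eta}$. In the alternative case $J(f) \ne 2^{k-1}$, no control on $\langle s, u_2, \ldots, u_k\rangle$ is imposed, so Weyl's uniform estimate for $T^\sharp_\mu(\ell)$ can degrade on a sporadic set of $(\ell, \mu)$ for which $\langle s, \ell u_j\rangle$ is atypically large; there one must fall back to the trivial bound $|T^\sharp_\mu| \le s$, and a careful accounting of this exceptional contribution together with the $O(s)$ boundary/truncation term accounts for the additive $s$ in the bound $N^{2\eta}(L s^{1-1/k} + s)$.
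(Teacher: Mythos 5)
Your decomposition is the same major-arc splitting as the paper's (it cites \cite[Lemma 4.4]{rcb1} for exactly this): extract $e(\ell G(v)/s)$ over residues mod $s$, identify the $\mu=0$ Fourier mode with $s^{-1}S(s,\ell G)\int_I e(\ell F(z))\,dz$, and bound the remaining modes with a Gauss-sum estimate times a first-derivative-test bound on $I_\mu(\ell)$. The verification that $|\ell F'(z)|\le(2s)^{-1}$ on $I$ follows correctly from \eqref{eq3.1} with $\ell\le L$, and the truncation/smoothing issue you flag is real but handled as you suggest (or, as in the paper, by partial summation to replace $\sum_{n\in I}e(\ell F(n))$ by the integral plus $O(1)$). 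So the framework is right.

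The genuine gap is the claimed \emph{uniform} bound $|T^\sharp_\mu(\ell)|\ll s^{1-1/k+\eta}$. Cochrane's estimate is
$|T^\sharp_\mu(\ell)|\ll\langle\ell u_1+\mu,\ell u_2,\ldots,\ell u_k,s\rangle^{1/k}\,s^{1-1/k+\eta}$,
and the hypothesis $\langle s,u_2,\ldots,u_k\rangle\ll LN^\eta$ only forces the gcd to be $\ll L^2N^\eta$, not $\ll N^{O(\eta)}$. Since in the applications $L$ is a fixed positive power of $N$, this factor is not absorbable, and plugging in the true pointwise bound yields $N^{O(\eta)}L^{1+2/k}s^{1-1/k}$ after summing over $\ell$ and $\mu$ — short of the claimed $N^{2\eta}Ls^{1-1/k}$. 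To close this, the paper keeps the gcd factor $d^{1/k}$ (capped at $(L^2N^\eta)^{1/k}$ when $J=2^{k-1}$) and \emph{counts}, for each $d\mid s$, the number of pairs $(\ell,b)$ with $d\mid\ell u_1+b$ via \Cref{lem6}; the estimate $\min(L,B)+BL/d$ is then what recovers the factor $L$ rather than $L^{1+2/k}$ (and, when $J\neq2^{k-1}$, produces the additive $+s$ from $s^{1-1/k}d^{1/k}$ at $d\sim s$ — not from a "trivial bound" on $T^\sharp_\mu$, as your sketch suggests). Without this congruence-counting step, neither case of \eqref{eq3.2} follows, so the proposal as written has a hole precisely where the lemma needs the factor $L$ rather than a higher power of $L$.
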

 
 \begin{proof}
Following the proof of \cite[Lemma 4.4]{rcb1}, we find that
 \begin{align*}
\sum_{n\in I} e(\ell f(x)) &- s^{-1} S(s, \ell G) \sum_{n\in I} e(\ell F(n))\\[2mm]
&\ll s^{-1} \sum_{b=1}^{s-1} \left\|\frac bs\right\|^{-1} \Bigg| \sum_{v=1}^s e\left(\frac{\ell G(v) + bv}s\right)\Bigg|\\
\intertext{and}
\sum_{n\in I} e(\ell F(n)) &= \int_I e(\ell F(z)) dz + O(1).
 \end{align*}
Moreover, by a standard estimate \cite{coch},
 \begin{align*}
S(s, \ell G) \ll \langle \ell &, s\rangle^{1/k} s^{1-1/k} N^\eta,\\[2mm]
\sum_{v=1}^s e\left(\frac{\ell G(v) + bv}s\right) &\ll \langle \ell u_1 + b, \ell u_2, \ldots, \ell u_k, s\rangle^{1/k} s^{1-1/k} N^\eta\\[2mm]
&\ll D_\ell s^{1-1/k} N^\eta
 \end{align*}
where $D_\ell = \min(L^2N^\eta, \langle \ell u_1 + b, s\rangle)^{1/k}$ if $J(f) = 1$ and $D_\ell = \langle \ell u_1 + b, s\rangle$ otherwise.

It follows that
 \begin{align*}
\sum_{n\in I} &e(\ell f(x)) - s^{-1} S(s, \ell G)\int_I e(\ell F(z))dz\\[2mm]
&\quad \ll \langle \ell, s\rangle^{1/k} s^{1-1/k} N^\eta + N^\eta s^{-1/k} \sum_{b=1}^{s-1} \left\|\frac bs\right\|^{-1} D_\ell. 
 \end{align*}

We now sum the absolute values of the left-hand side over $\ell \le L$. Since the contribution from $\sum\limits_{\ell = 1}^L \langle \ell, s\rangle^{1/k}$ is $\ll LN^\eta$, a splitting-up argument shows that we need only prove the bound
 \[
\ll \begin{cases}
N^\eta Ls^{1-1/k} & (J(f) = 2^{k-1})\\[2mm]
N^\eta(Ls^{1-1/k}+s) & \text{(otherwise)}
 \end{cases}
 \]
for the quantity
 \begin{equation}\label{eq3.3}
\frac{s^{1-1/k}}B \ \sum_{\frac B2 \le b < 2B} \ \sum_{\substack{\ell = 1\\
d\mid \ell u + b}}^L A_d 
 \end{equation}
where $d\mid s$ and
 \[
A_d = \begin{cases}
\min(L^{2/k} N^\eta, d^{1/k}) & \text{if } J(f) = 1\\[2mm]
d^{1/k} & \text{otherwise}.
 \end{cases}
 \]
Applying \Cref{lem6}, the left-hand side of \eqref{eq3.3} is
 \[
\ll \frac{s^{1-1/k}}B\ \min(L^{2/k} N^\eta, d^{1/k}) \left(\frac{BL}d + B\right) \ll N^\eta s^{1-1/k} L
 \]
if $J(f) = 2^{k-1}$. Otherwise we obtain
 \[
\ll \frac{s^{1-1/k}}B\ d^{1/k} \left(\frac{BL}d + B\right) \ll s^{1-1/k} L + s.
 \]
This proves the lemma.
 \end{proof}

 \begin{lem}\label{lem8}
Let $k \ge 2$ and $Y \ll N^{1-5\rho/2}(k=2)$, $Y \ll N^{1/2+\rho}$ $(k \ge 3)$. Then, with $g$ as defined in Section \ref{sec1}, we have
 \begin{equation}\label{eq3.4}
\sum_{\ell = 1}^L \ \sum_{y\sim Y} \Bigg|\sum_{n\in I(y)} e(\ell g(yn))\Bigg| \ll N^{1-2\eta},
 \end{equation}
where $I(y) = \left(\frac N{2y}, \frac Ny\right]$.
 \end{lem}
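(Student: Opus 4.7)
The strategy is to invoke the Weyl-structure result (\Cref{lem5}) on the inner sum, extracting a rational approximation to $sa y^k/q$ when the inner sum is large, and then count the admissible $y$'s using the $\mc M_k$ bounds of Section~\ref{sec2}. Let $T(\ell, y) = \sum_{n \in I(y)} e(\ell g(yn))$; as a polynomial in $n$ this is a degree-$k$ Weyl sum of length $\asymp X/2$ where $X = N/y$, with leading coefficient $\ell(a/q)y^k$. Dyadic pigeonhole on $|T(\ell,y)|$, $\ell$, and $y$ reduces matters to bounding, for each dyadic $P$, the contribution from pairs $(\ell, y)$ with $|T(\ell, y)| \asymp P$. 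In the regime $P \le X^{1-1/J+\eta}$ the trivial estimate gives $\ll L Y X^{1-1/J+\eta}$, and the hypothesis on $Y$ (together with $L \le N^\rho$) yields $\ll N^{1-2\eta}$.

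In the complementary regime $P > X^{1-1/J+\eta}$, I would apply \Cref{lem5} with $M = 1$ to $f(n) = \ell g(yn)$ — or, when $J(g(y\cdot)) = 2^{k-1}$ (automatic for $k \le 5$, and in many further cases up to $k=7$), with $M = L$ to $g(y\cdot)$ after first summing over $\ell$. Either way one extracts a positive integer $s = s(\ell, y) \ll R := (X/P)^k X^\eta$ and an integer $u_k$ satisfying $\|S a y^k/q\| \le R X^{-k}$, where $S = s\ell$ (respectively $S = s$). Writing $S = s_0 s_1$ with $s_0$ squarefull and $s_1$ squarefree, and pigeonholing $s_0 \sim S_0$, $s_1 \sim S_1$, the number of contributing pairs in this parameter window is at most $N^\eta$ (the divisor factor for factorings $S \mapsto (s, \ell)$) times $\mc M_k(Y, X^k/R, S_0, S_1)$. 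Applying \eqref{eq2.4} for $k \ge 4$, \eqref{eq2.6} for $k = 2$, and \eqref{eq2.8} for $k = 3$, multiplying by $P$, and summing over dyadic $P$, $S_0$, $S_1$, should yield the claimed bound.

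The main obstacle is the parameter balancing in this second regime. The hypothesis \eqref{eq2.5}, $4 S_0 S_1 N^{k\rho} < q$, must be verified in every dyadic range; this follows from $S_0 S_1 \le S \ll R L$ combined with $q \gg (L_1 N^k)^{1/2}$ and the lower bound on $P$. The different hypotheses on $Y$ for $k = 2$ versus $k \ge 3$ reflect the different structures of the $\mc M_k$ estimates: for $k = 2$ the additive factor $Y + q^{1/2}$ in \eqref{eq2.6} dictates the sharper constraint $Y \ll N^{1-5\rho/2}$, while for $k \ge 3$ the cleaner $S_0^{1/2} S_1 q/Z$ shape of \eqref{eq2.4} (and, for $k=3$, the awkward $Y^{1/4} q^{1/4} Z^{-1/4}$ term in \eqref{eq2.8}) is compatible with the weaker $Y \ll N^{1/2+\rho}$. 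The remaining minor terms in \eqref{eq2.8} — notably the $Y^{1/2}$ and $Y Z^{-1/4}$ contributions — need to be checked separately, but they are controlled by the size bound on $Y$ and the fact that $Z = X^k/R$ is large in the non-trivial regime.
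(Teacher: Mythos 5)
Your overall architecture --- apply \Cref{lem5} when the inner sum is large, extract a rational approximation to $sa y^k/q$, pigeonhole on the size of the denominator and the accuracy, and count via the $\mc M_k$ bounds of Section~\ref{sec2} --- is the right framework, but a crucial intermediate step is missing and this is a genuine gap rather than a cosmetic omission.

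The paper does not pass directly from \Cref{lem5} to the counting. After \Cref{lem5} it invokes \Cref{lem7}, which replaces $\sum_n e(\ell g(yn))$ by the major-arc approximation $s^{-1}S(s,\ell G)\int e(\ell F)$, and from the resulting inequality \eqref{eq3.7}, together with the Gaussian sum bound \eqref{eq3.8} and the oscillatory integral bound \eqref{eq3.9}, it extracts the constraint \eqref{eq3.10}, namely $s_0^{1/k} s_1^{1/2} \ll L N^{3\eta}$. This \emph{weighted} constraint on the squarefull and squarefree parts of $s$ is indispensable in the final calculation (for $k \ge 4$ the bound carries a factor $S_0^{1/2k}S_1^{-1/2+2/k}$, and only \eqref{eq3.10} keeps it under control). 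Your plan retains only the cruder $s \ll (X/P)^k X^{\eta}$ from \Cref{lem5} and the derived $S = s\ell \ll RL$; in the critical regime $P \approx X^{1-1/J}$ this bound is far weaker than \eqref{eq3.10}, and the parameters will not balance to yield $N^{1-2\eta}$.

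There is a second problem in your counting step. Applying \Cref{lem5} with $M = 1$ to $f(n) = \ell g(yn)$ produces a denominator $s = s(\ell, y)$ depending on \emph{both} variables. You then set $S = s\ell$ and claim the number of contributing pairs $(\ell, y)$ in a pigeonhole class $(S_0, S_1, Z)$ is $\ll N^{\eta}\,\mc M_k(Y, Z, S_0, S_1)$, citing a divisor argument for factorings $S \mapsto (s,\ell)$. But $\mc M_k$ counts $y$'s, not pairs, and for fixed $y$ distinct $\ell$'s need not give the same $S$; they may populate many distinct values of $S$ in the window, and the divisor bound controls multiplicity only for a \emph{fixed} $S$, not aggregated over the interval. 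The paper avoids this entirely by producing a single $s = s(y)$ for each $y$ --- via $M = L$ when $J(f) = 2^{k-1}$, or via the choice of a single $m_0$ and the reduction $s = s'm_0/d$ when $J(f) \ne 2^{k-1}$ --- and then performs the $\ell$-summation analytically through \eqref{eq3.8} and \eqref{eq3.9}. You need to restructure so that $s$ is attached to $y$ alone before any pigeonholing, and thread \Cref{lem7} through to obtain \eqref{eq3.10}; without both of these the argument does not close.
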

 
 \begin{proof}
Let $\mc S$ be the set of $y \sim Y$ with $\langle y, q\rangle \le N^\rho$ and
 \begin{equation}\label{eq3.5}
\sum_{\ell = 1}^L  \Bigg|\sum_{n\in I(y)} e(\ell g(yn))\Bigg| > N^{1-2\eta} Y^{-1}.
 \end{equation}
It suffices to show that
 \[
T:= \sum_{y \in \mc S} \  \sum_{\ell = 1}^L  \Bigg|\sum_{n\in I(y)} e(\ell g(yn))\Bigg| \ll N^{1-2\eta}.
 \]
To see this, the contribution in \eqref{eq3.4} from $y\sim Y$ for which \eqref{eq3.5} fails is $< N^{1-2\eta}$. The contribution from $y \sim Y$ for which $\langle y, q\rangle$ is a fixed divisor $d$ of $q$, $d > N^\rho$, is
 \[
\ll \frac Yd \, L\, \frac NY \ll N^{1-3\eta}
 \]
and our claim follows on summing over $d$.

Given $y \in \mc S$, we apply \eqref{eq3.5} in Lemmas \ref{lem5} and \ref{lem7}. Here $\g_k = ay^k/q$, $\g_j = \a_jy^j$ $(j < k)$. Suppose first that $J(f) = 2^{k-1}$. Take $X = \frac NY$, $M = L$ in \Cref{lem5}. Then
 \[
P = N^{1-2\eta} Y^{-1} \ge L\left(\frac NY\right)^{1-1/J+\eta}
 \]
since $Y \ll N^{1-J\rho}$. The integers $s$, $u_1, \ldots, u_k$ provided by \Cref{lem5} satisfy \eqref{eq3.1}, since $k \le J$ and so
 \[
L^k N^\eta \ll \frac NY\, N^{-\eta}.
 \]
Lemma 7 yields
 \begin{align}
\frac{N^{1-2\eta}}Y &\ll \sum_{\ell = 1}^L \Bigg|\sum_{n\in I(y)} e(\ell g(n))\Bigg|\label{eq3.6}\\[2mm]
&\ll \sum_{\ell=1}^L  \Big|s^{-1} S(s, \ell G) \int_{I(y)} e(\ell F(z))dz\Big| + L^k N^{3\eta k}\notag
 \end{align}
where $I(y) = \left(\frac N{2y}, \frac Ny\right]$. Here we suppress dependence of $F$, $G$ on $y$. The last term is of smaller order than $\frac{N^{1-2\eta}}Y$ in \eqref{eq3.6}, so that
 \begin{equation}\label{eq3.7}
\frac{N^{1-2\eta}}Y \ll \sum_{\ell=1}^L \left|s^{-1} S(s, \ell G) \int_{I(y)} e(\ell F(z))dz\right|.
 \end{equation}

We now show that that \eqref{eq3.7} also holds when $J(f) \ne 2^{k-1}$. Select $m_0 = m_0(y)$ such that
 \[
\Bigg|\sum_{n\in I(y)} e(m_0g(n))\Bigg| \ge P : = \frac{N^{1-2\eta}}{YL}\, . 
 \]
We have $Y \ll N^{1-J\rho}$, as is easily verified. Hence
 \[
P \ge \left(\frac NY\right)^{1-\frac 1J + \eta}.
 \] 
We apply \Cref{lem5} with $f = m_0g$, obtaining integers $s'$, $u_1',\ldots,u_k'$ with $s' \ll L^k N^{3k\eta}$,
 \[
|s' m_0 \g_j - u_j'| \ll L^k N^{3k\eta} \left(\frac NY\right)^{-j} \quad (j=1, \ldots, k).
 \] 

Let $d = \langle s' m_0, u_1', \ldots, u_k'\rangle$ and $s = \frac{s'm_0}d$, $u_j = \frac{u_j'}d$. Then 
 \[
|s\g_j - u_j| \ll L^k N^{3\rho k}\left(\frac NY\right)^{-j} \ll L^{-1} N^{-\eta} \left(\frac NY\right)^{-(j-1)} 
 \]
since $J \ge k + 1$. Thus we can apply \Cref{lem7}. In the analogue of \eqref{eq3.6}, the second term on the right-hand side is now
 \[
\ll (Ls^{1-1/k} + s) N^\eta \ll N^{(k+1)\rho} \ll \frac NY\, N^{-3\eta}, 
 \]
and we again end up with \eqref{eq3.7}.

Factorizing $s$ as $s = s_0s_1$ with $s_0$ square-full, $s_1$ square-free, and $\langle s_0, s_1\rangle = 1$, we have
 \begin{equation}\label{eq3.8}
s^{-1} S(s, \ell G) \ll \left(\frac{s_0}{\langle s_0, \ell\rangle}\right)^{-1/k} \left(\frac{s_1}{\langle s_1, \ell\rangle}\right)^{-1/2}. 
 \end{equation}
See Cochrane \cite{coch} for more general results. The estimate
 \begin{equation}\label{eq3.9}
\int_{I(y)} e(\ell F(z))dz \ll \min \left(\frac NY, \ell^{-1/k}\left|\frac{y^ka}q - \frac{u_1}s\right|^{-1/k}\right) 
 \end{equation}
is a consequence of Vaughan \cite[Theorem 7.3]{vau}. Putting the trivial estimate in \eqref{eq3.9} together with \eqref{eq3.7}, \eqref{eq3.8}, we have
 \[
\frac{N^{1-2\eta}}Y \ll \sum_{\ell=1}^L \langle s, \ell\rangle^{1/2} \cdot \frac 1{s_0^{1/k} s_1^{1/2}} \ \frac NY, 
 \]
whence
 \begin{equation}\label{eq3.10}
s_0^{1/k} s_1^{1/2} \ll LN^{3\eta}. 
 \end{equation}

We now subdivide $\mc S$ into $O((\log N)^3)$ classes according to the values of $s_0 = s_0(y)$, $s_1 = s_1(y)$ and
 \[
\left|\frac{y^ka}q - \frac{u_k}{s_0s_1}\right|.
 \]
In each class $\mc S(Z, S_0, S_1)$, we have $s_0 \sim S_0$, $s_1 \sim S_1$ with
 \begin{equation}\label{eq3.11}
S_0^{1/k} S_1^{1/2} \ll LN^{3\eta}, 
 \end{equation}
and, with $Z_0$ defined below, $Z = 2^{-j}Z_0 \ge 2$, also
  \begin{equation}\label{eq3.12}
\frac 1{2s_0s_1 Z} \le \left|\frac{y^ka}q - \frac{u_k}{s_0s_1}\right| < \frac 1{s_0s_1Z} \ \text{ or } \ (\text{if } Z = Z_0) \left|\frac{y^ka}q - \frac{u_k}{s_0s_1}\right| < \frac 1{s_0s_1Z}.
 \end{equation}
Here
 \[
L^{-1/k}(Z_0 S_0 S_1)^{1/k} = \frac NY\, .
 \]
From \eqref{eq3.7}, \eqref{eq3.8}, \eqref{eq3.9}, \eqref{eq3.11}, \eqref{eq3.12} there is a class $\mc S^* = \mc S(Z, S_0, S_1)$ and an $L_0\in [1,L)$ such that
 \begin{align*}
T &\ll N^\eta \sum_{y \in \mc S^*} \ \sum_{\ell \sim L_0} \langle s_0(y) s_1(y), \ell\rangle^{1/2} L_0^{-1/k} Z^{1/k} S_1^{-\frac 12 + \frac 1k}\\[2mm]
&\ll S_1^{-\frac 12 + \frac 1k} L^{1-1/k} N^{2\eta} Z^{1/k} \# \ \mc S^*.
 \end{align*}

We can estimate $\#\, \mc S^*$ using the results of \Cref{sec2}, since for every $y \in \mc S^*$, there is an $s \in \mc A(S_0, S_1, 1, 1)$ with
 \[
\left\|\frac{say^k}q\right\| < \frac 1Z.
 \]
Thus, in the notation of \Cref{sec2},
 \begin{equation}\label{eq3.13}
T \ll S_1^{-\frac 12 + \frac 1k} L^{1-1/k} N^{2\eta} Z^{1/k} \mc M_k(Y, Z, S_0, S_1). 
 \end{equation}
We now conclude the proof by considering separately the cases $k=2$, $k=3$, and $k\ge 4$. It is easy to verify the condition \eqref{eq2.5} needed for our bounds on $\mc M_k$, since
 \[
S_0 S_1 N^{k\rho} \ll N^{2k\rho} \ll N \ll qN^{-\rho/2}.
 \]

\noindent $\bs{k=2}$. Recalling \eqref{eq2.6}, we deduce from \eqref{eq3.13} that
 \begin{align*}
T &\ll L^{1/2} N^{5\eta} S_0^{1/2} S_1(Y + q^{1/2})\\[2mm] 
&\ll L^{1/2} N^{5\eta} S_0^{1/2} S_1 N^{1-5\rho/2}
 \end{align*}
since $\frac 12 + \frac\rho 4 < 1 - \frac{5\rho}2$. Using \eqref{eq3.11},
 \[
T \ll L^{5/2} N^{1 + 11\eta - 5\rho/2} \ll N^{1-2\eta}.
 \]
\bigskip

\noindent $\bs{k=3}$. Recalling \eqref{eq2.4}, \eqref{eq2.8},
 \begin{align}
T &\ll L^{2/3} N^{4\eta} Z^{1/3} S_0^{1/2} S_1^{5/6} \min\left(\frac qZ, \, Y^{1/2} + \frac{(Y + Y^{1/4} q^{1/4})}{Z^{1/4}}\right)\label{eq3.14}\\[2mm]
&\ll L^{7/3} N^{9\eta} Z^{1/3} \min \left(\frac qZ, \, N^{1/4+\rho/2} + \frac{N^{1/2+\rho}}{Z^{1/4}}\right),\notag
 \end{align}
since $Y^{1/4}q^{1/4} \ll N^{1/8 + \rho/4 + 3/8 + \rho/8}$. Next,
 \begin{align*}
L^{7/3} N^{9\eta} Z^{1/3} &\min\left(\frac qZ, \, N^{1/4+\rho/2}\right)\\[2mm]
&\le L^{7/3} N^{9\eta} q^{1/3} N^{(1/4+\rho/2)2/3}\\[2mm]
&\ll N^{2/3 + \rho(7/3 + 1/6 + 1/3)} \ll N^{1-2\eta}\\
\intertext{and}
L^{7/3} N^{9\eta} Z^{1/3} &\min\left(\frac qZ, \, \frac{N^{1/2+\rho}}{Z^{1/4}}\right)\\[2mm]
&\le L^{7/3} N^{9\eta} q^{1/9} (N^{1/2 + \rho})^{8/9}\\[2mm]
&\ll N^{11/18 + \rho(7/3 + 1/18 + 8/9)} \ll N^{1-2\eta},
 \end{align*}
completing the discussion for $k = 3$.
 \bigskip

\noindent $\bs{k\ge4}$. Using \eqref{eq2.4}, \eqref{eq3.11},
 \begin{align*}
T &\ll S_1^{-1/2 + 1/k} L^{1-1/k} N^{5\eta} Z^{1/k} \min \left(Y, \frac{S_0^{1/2} S_1q}Z\right)\\[2mm]
&\ll L^{1-1/k} N^{5\eta} Y^{1-1/k} S_0^{1/2k} S_1^{-1/2 + 2/k} q^{1/k}\\[2mm]
&\ll N^{\rho(1-1/k)+(1/2+\rho)(1-1/k) + \rho/2 + 1/2 + \rho/2k}\\[2mm]
&\ll N^{1-2\eta} 
 \end{align*}
since (as we easily verify)
 \[
\rho\left(\frac 52 - \frac 3{2k}\right) < \frac 1{2k}.
 \]
This completes the proof of \Cref{lem8}.
 \end{proof}
 \vskip .3in

\section{Type II sums}\label{sec4}

Our object in the present section is to prove

 \begin{lem}\label{lem9}
For $k \ge 3$, let $N^\rho \ll Y \ll N^{1-2J\rho}$,$J = J(f_k)$. Let $|a_x| \le 1$ $(x \le \frac NY)$, $|b_y| \le 1$ $(y \sim Y)$. Then 
 \[
S : =  \sum_{\ell = 1}^L \Bigg|\mathop{\sum_{x \le \frac NY} \, a_x \, \sum_{y\sim Y}}_{\frac N2 < xy \le N} b_y e(\ell g(xy))\Bigg| \ll N^{1-\eta}.
 \]
  \end{lem}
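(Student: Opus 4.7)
The plan is to apply a single Cauchy--Schwarz, treating $x$ as the outer variable and the pair $(\ell,y)$ as the squared pair. This is the arrangement that produces the weaker requirement $Y \gg LN^{2\eta}$ advertised in \Cref{sec1}, in contrast to the $Y \gg L^2 N^{2\eta}$ that would follow from squaring out $(\ell,x)$ instead. Concretely, I choose $\xi_\ell$ with $|\xi_\ell|=1$ to open the outer absolute values and write
\[
S = \sum_{x \le N/Y} a_x \mathop{\sum_{\ell \le L}\sum_{y\sim Y}}_{N/2<xy\le N} \xi_\ell b_y\, e(\ell g(xy)).
\]
Cauchy--Schwarz in $x$, followed by expansion of the square and interchange of summation, gives
\[
|S|^2 \le \frac{N}{Y} \sum_{\ell_1,\ell_2,y_1,y_2} \xi_{\ell_1}\overline{\xi_{\ell_2}} b_{y_1}\overline{b_{y_2}}\, W(\ell_1,\ell_2,y_1,y_2),
\]
where $W=\sum_{x\in J} e(\Phi(x))$, the polynomial $\Phi(x)=\ell_1 g(xy_1)-\ell_2 g(xy_2)$ has degree $k$ in $x$ with leading coefficient $(a/q)(\ell_1 y_1^k-\ell_2 y_2^k)$, and $J$ is an interval of length $\ll N/Y$.

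The diagonal quadruples $(\ell_1,y_1)=(\ell_2,y_2)$ contribute at most $(N/Y)\cdot LY \cdot (N/Y) = LN^2/Y \ll N^{2-2\eta}$, because $Y \gg N^\rho \gg LN^{2\eta}$ by the hypothesis on $Y$ and the choice $L=N^{\rho-\e/3}$. This is the key point: squaring out $(\ell,y)$ rather than $(\ell,x)$ replaces an $L^2$ factor in the diagonal by a single $L$, which is precisely what the weaker hypothesis on $Y$ can absorb.

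For the off-diagonal quadruples I bound $W$ via \Cref{lem5} with $M=1$ and $X\asymp N/Y$, split dyadically according to the size of $|W|\sim P$, and reduce matters to counting, for each $P \ge (N/Y)^{1-1/J+\eta}$, the quadruples that admit a denominator $s \ll ((N/Y)/P)^k (N/Y)^\eta$ together with an integer $u_k$ for which $\|s(a/q)(\ell_1 y_1^k - \ell_2 y_2^k)\| \ll ((N/Y)/P)^k (N/Y)^{\eta-k}$. Writing $m = s(\ell_1 y_1^k - \ell_2 y_2^k)$, this is precisely the inequality $\|am/q\|<1/Z$ controlled by the functions $\mc M_k$ of \Cref{sec2} through \eqref{eq2.4}, \eqref{eq2.6}, and \eqref{eq2.8}.

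The main obstacle is carrying out this count efficiently, since $m$ depends on the quadruple through a multi-variable expression. My approach is to fix $(\ell_2, y_2)$ and $s$, decompose $s=s_0 s_1$ into its square-full and square-free parts with $\langle s_0,s_1\rangle=1$, split according to $d_i=\langle s_i,q\rangle$ as in \Cref{lem4}, and then invoke the appropriate $\mc M_k$-bound for the residual count over $(\ell_1,y_1)$. A subsidiary nuisance is the near-diagonal case $\ell_1 y_1^k=\ell_2 y_2^k$ with $(\ell_1,y_1)\ne(\ell_2,y_2)$, in which $W$ is only trivially bounded by $N/Y$; here a standard divisor-bound argument shows that the number of such quadruples is $\ll LY\cdot N^\eta$, which is absorbed. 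The upper bound $Y\ll N^{1-2J\rho}$ is finally used to close the budget for the off-diagonal, yielding $|S|^2\ll N^{2-2\eta}$ and hence $S\ll N^{1-\eta}$.
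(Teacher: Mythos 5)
Your Cauchy--Schwarz arrangement is exactly the paper's: open the absolute values with unimodular $\xi_\ell$, apply Cauchy--Schwarz in $x$, and treat $(\ell,y)$ as the bilinear pair. Your diagonal analysis is also correct, including the observation that this arrangement replaces $L^2$ by $L$ in the diagonal and so only needs $Y\gg LN^{O(\eta)}$, which the hypothesis $Y\gg N^\rho$ supplies since $L=N^{\rho-\e/3}$. The divisor-bound treatment of the near-diagonal $\ell_1 y_1^k=\ell_2 y_2^k$ likewise matches the paper.

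Where your proposal diverges, and where it has a genuine gap, is the off-diagonal treatment. You propose a dyadic decomposition in the size of $|W|$ followed by a count of quadruples via the $\mc M_k$-machinery of \Cref{sec2}. Two problems. First, $\mc M_k(Y,Z,S_0,S_1)$ counts single variables $y\sim Y$ satisfying $\|say^k/q\|<1/Z$ for some $s$ in a structured set; after you fix $(\ell_2,y_2)$ and $s$, the residual inequality $\|sa(\ell_1 y_1^k-\ell_2 y_2^k)/q\|<1/Z$ is a joint condition in the \emph{pair} $(\ell_1,y_1)$ with a nontrivial additive shift $-\ell_2 y_2^k$, and none of the $\mc M_k$-bounds as stated apply to that shape. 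You'd have to rebuild the counting from scratch, and it's not clear it closes. Second, and more fundamentally, you've missed that no counting is needed here. The paper shows that for \emph{every} off-diagonal quadruple the inequality supplied by \Cref{lem5} leads to a contradiction: applying \Cref{lem5} with $M=1$, $X=N/Y$, $P=N^{1-4\eta}Y^{-1}L^{-2}$ (the validity of $P\ge X^{1-1/J+\eta}$ is exactly where $Y\ll N^{1-2J\rho}$ enters) produces $s\ll N^{2k\rho-\eta}$ and
\[
\left\|\frac{sa}{q}\bigl(\ell_1 y_1^k-\ell_2 y_2^k\bigr)\right\| < N^{2k\rho}\left(\frac NY\right)^{-k},
\]
and one checks that the right-hand side is $<1/q$ (using $Y\ll N^{1/5}$ and $(2k+\tfrac12)\rho<\tfrac{3k}{10}$), while $|s(\ell_1 y_1^k-\ell_2 y_2^k)|$ is a nonzero integer of size $<qN^{-\eta}$, so the left-hand side is a nonzero rational with denominator $q$ and hence $\ge 1/q$. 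Thus the off-diagonal contribution is not merely small but simply absent beyond the threshold. Your proposal should be simplified to this contradiction argument; as written it does not establish the lemma.
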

  
We observe that the condition $\frac N2 < xy \le N$ may be removed at the cost of a log factor \cite[Section 3.2]{har4}, and we shall show that $S'$, defined like $S$ without this condition, is $\ll N^{1-2\eta}$.

 \begin{proof}[Proof of \Cref{lem9}]
We write, throughout this section,
 \[
S' = \sum_{\ell = 1}^L c_\ell \sum_{y\sim Y} b_y \sum_{x \le \frac NY} a_x e(\ell g(xy))
 \]
where $|c_\ell| = 1$, so that
 \[
|S'| \le \sum_{x \le \frac NY} \Bigg|\sum_{\ell = 1}^L c_\ell \sum_{y\sim Y} b_y e(\ell g(xy))\Bigg|.
 \]
By the Cauchy-Schwarz inequality,
 \begin{align}
|S'|^2 &\le \frac NY \ \sum_{x\le \frac NY} \Bigg|\sum_{\ell = 1}^L \ \sum_{y\sim Y} \ c_\ell b_y e(\ell g(xy))\Bigg|^2\label{eq4.1}\\[2mm]
&= \frac NY \ \sum_{\ell_1, \ell_2 = 1}^L \ \sum_{y_1, y_2 \sim Y} c_{\ell_1} \bar c_{\ell_2} b_{y_1} \bar b_{y_2} \ \sum_{x\le \frac NY} e(\ell_1g(xy_1) - \ell_2g(xy_2)).\notag
 \end{align}

The contribution from quadruples $(\ell_1, \ell_2, y_1, y_2)$ with $\ell_1 y_1^k = \ell_2 y_2^k$ is
 \[
\ll \left(\frac NY\right)^2 N^\eta LY \ll N^{2-4\eta}
 \]
by a divisor argument, since $Y \ge N^\rho$. Hence it suffices to show that
 \[
\Bigg|\sum_{x \le \frac NY} e(\ell_1 g(xy_1) - \ell_2 g(xy_2))\Bigg| < \frac{N^{1-4\eta}}Y\, L^{-2}
 \]
for a given quadruple with $\ell_1 y_1^k \ne \ell_2 y_2^k$, $\ell_j \le L$, $y_j\sim Y$.

Suppose the contrary. We may apply \Cref{lem5} with $X = \frac NY$, $M=1$, and $P = \frac{N^{1-4\eta}}Y\, L^{-2}$. We have
 \[
P \ge X^{1 - \frac 1J + \eta}
 \]
since
 \[
X^{1-\frac 1J + \eta} P^{-1} \le N^{5\eta}\left(\frac NY\right)^{-\frac 1J} L^2 \le N^{5\eta}(N^{2J\rho})^{-\frac 1J} L^2 \le 1.
 \]
Hence there exists a natural number $s$ and an integer $u$,
 \begin{gather*}
s \ll (N^{2\eta} L^2)^k \ll N^{2k\rho-\eta},\\
\left|s \left(\frac{\ell_1 ay_1^k - \ell_2ay_2^k}q\right) - u\right| < N^{2k\rho} \left(\frac NY\right)^{-k}, 
 \end{gather*}
that is,
 \begin{equation}\label{eq4.2}
\left\|\frac{sa}q\, (\ell_1 y_1^k - \ell_2 y_2^k)\right\| < N^{2k\rho}\left(\frac NY\right)^{-k}. 
 \end{equation}
The right-hand side of \eqref{eq4.2} is less than $1/q$, since $Y \ll N^{1/5}$ and
 \begin{equation}\label{eq4.3}
N^{2k\rho}\left(\frac NY\right)^{-k} q \ll N^{(2k+\frac 12)\rho - 3k/10} \ll N^{-\eta} 
 \end{equation}
(it is easy to verify that $\rho < \frac{3k}{20k+5}$). However, the integer $sa(\ell_1 y_1^k - \ell_2y_2^k)$ is not divisible by $q$, since
 \[
1 \le |s(\ell_1 y_1^k - \ell_2 y_2^k)| \ll N^{(2k+1)\rho} Y^k \ll qN^{-\eta}
 \]
by the same inequality
 \[
\left(2k + \frac 12\right)\rho \le \frac{3k}{10} - \eta
 \]
used in \eqref{eq4.3}. Thus \eqref{eq4.2} cannot hold. This completes the proof of \Cref{lem9}.
 \end{proof}
 
Before we consider a variant of \Cref{lem9} for $k=2$, we note the following way of using the Cauchy-Schwarz inequality:
 \begin{align*}
|S'|^2 &\le \Bigg\{\sum_{\ell = 1}^L \ \sum_{x \le \frac NY} \Bigg|\sum_{y\sim Y} b_y e(\ell g(xy))\Bigg|\Bigg\}^2\\[2mm]
&\le \frac{LN}Y \ \sum_{y_1, y_2 \sim Y} \Bigg|\sum_{\ell=1}^L \ \sum_{x\le \frac NY} e(\ell(g(xy_1) - g(xy_2)))\Bigg|.
 \end{align*}
To prove that $|S'|^2 \ll N^{2-4\eta}$ it suffices to show that
 \begin{equation}\label{eq4.4}
R : = \sum_{y_1, y_2 \sim Y} \ \sum_{\ell=1}^L \Bigg|\sum_{x \le \frac NY} e(\ell(g(xy_1) - g(xy_2)))\Bigg| \ll N^{1-4\eta} YL^{-1}. 
 \end{equation}

We need one more lemma.

 \begin{lem}\label{lem10}
Let $W$, $X$, $Y$ be positive integers greater than 1. Then the inequality
 \[
\min_{s\le W} \left\|\frac{a(y_1^2 - y_2^2)s}q\right\| < \frac 1X
 \]
is satisfied for
 \[
\ll \left(\frac{WY^2}q + 1\right) \left(1 + \frac qX\right) (WY)^\eta
 \]
pairs $y_1$, $y_2$ with $y_1 \ne y_2$, $y_1$, $y_2 \sim Y$.
 \end{lem}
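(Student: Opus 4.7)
The plan is to reduce the counting to bounding the number of admissible integers $n = y_1^2 - y_2^2$, and then to control these via a divisor-function estimate applied to integers of the form $c + kq$. Since $\gcd(a,q) = 1$, letting $\bar a$ denote an inverse of $a$ modulo $q$, the inequality $\|a(y_1^2-y_2^2)s/q\| < 1/X$ is equivalent to $sn \equiv c \pmod q$ for some $c$ in the set $C = \{r\bar a \bmod q : r \in \mathbb Z,\ |r| < q/X\}$, which has $|C| \le 2q/X + 1$.

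For $y_1, y_2 \sim Y$ with $y_1 \ne y_2$, the integer $n = (y_1-y_2)(y_1+y_2)$ lies in $[2Y, 4Y^2)$ up to sign, and each such $n$ arises from at most $d(|n|) \ll (WY)^\eta$ pairs (via its factorisation into $u = y_1-y_2$ and $v = y_1+y_2$). Hence it suffices to bound the number of ``good'' $n$ (those satisfying the congruence for some $s\le W$) by $\ll(WY^2/q+1)(q/X+1)(WY)^\eta$. The key observation is that each good $n$ admits at least one witness pair $(s, c)$, so
\[
\#\{\text{good } n\} \le \sum_{c \in C} \#\{(s,n) : s\in[1,W],\ 2Y \le |n| < 4Y^2,\ sn \equiv c\pmod q\}.
\]
For each fixed $c$, a pair $(s,n)$ in the inner set corresponds to a factorisation $sn = c + kq$ with $k \in \mathbb Z$; the bound $|sn|\le 4WY^2$ forces $|k| \le 4WY^2/q + O(1)$. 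For each such $k$, the integer $c+kq$ admits at most $d(|c+kq|)\ll (WY)^\eta$ factorisations as $s\cdot n$ within the required ranges. Summing over $k$ gives $\ll (WY^2/q+1)(WY)^\eta$ per $c$, and summing over $c \in C$ produces the claim.

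The main obstacle is justifying the divisor estimate $d(|c+kq|) \ll (WY)^\eta$: since $|c+kq|\le 4WY^2+q$, this follows from the standard bound $d(m)\ll_\epsilon m^\epsilon$, with the $q^\epsilon$ contribution absorbed into $(WY)^\eta$ by enlarging $\eta$ slightly, which is permissible because both $q$ and $WY$ are bounded by a fixed power of $N$ in the paper's setting. A naive union bound over $s$ alone would yield only $\ll W(1+Y^2/q)(1+q/X)(WY)^\eta$ with a spurious factor of $W$; the present triple-counting trick via the equation $sn = c + kq$ is what recovers the sharp shape $(WY^2/q + 1)(1 + q/X)$.
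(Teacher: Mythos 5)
Your argument is correct, and it is a genuine addition: the paper itself gives no proof of \Cref{lem10}, simply citing Baker--Harman \cite[Lemma~7]{bh}, and your reduction to a congruence count via divisor estimates is almost certainly the substance of what that reference does. The structure is sound: using $\langle a,q\rangle=1$ to reduce $\|a(y_1^2-y_2^2)s/q\|<1/X$ to $sn\equiv c\pmod q$ with $c$ ranging over $\ll 1+q/X$ residues, controlling the pairs $(y_1,y_2)$ giving a fixed $n=y_1^2-y_2^2$ by $d(|n|)$, and then counting pairs $(s,n)$ with $sn\equiv c\pmod q$ by triple-counting through the equation $sn=c+kq$, which gives $\ll 1+WY^2/q$ admissible $k$, each contributing $\ll d(|c+kq|)$ factorizations.

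One small point can be streamlined, which also removes the only place you had to invoke the paper's global setting. You wrote $|c+kq|\le 4WY^2+q$ and then worried about absorbing a $q^{\epsilon}$ into $(WY)^{\eta}$. But once $c+kq$ actually occurs as a factorization $s\cdot n$ with $1\le s\le W$ and $|n|<4Y^2$, you automatically have $|c+kq|=|sn|\le 4WY^2\le 4(WY)^2$, independently of $q$. So the standard bound $d(m)\ll_\delta m^{\delta}$ with $\delta$ a small multiple of $\eta$ already gives $d(|c+kq|)\ll (WY)^{\eta/2}$ with no extraneous $q$-dependence, and similarly $d(|n|)\ll (Y^2)^{\delta}\ll (WY)^{\eta/2}$. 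The two factors of $(WY)^{\eta/2}$ multiply to $(WY)^{\eta}$, matching the stated bound without any appeal to $q\ll N^{O(1)}$. This makes the lemma genuinely unconditional in $W,X,Y,q$, as its statement suggests it should be.
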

 
 \begin{proof}
See \cite[Lemma 7]{bh}.
 \end{proof}
 
 \begin{lem}\label{lem11}
Let $k=2$. Let $N^{2\rho} \ll Y \ll N^{1-4\rho}$. Then for $|a_x| \le 1$ $\left(x \le \frac NY\right)$, $|b_y| \le 1$ $(y \sim Y)$, we have
 \[
\sum_{\ell =1}^L \Bigg|\sum_{x\le N/Y} \, a_x \sum_{\substack{y\sim Y\\
\frac N2 < xy \le N}} b_y e(\ell g(xy))\Bigg| \ll N^{1-\eta}.
 \]
 \end{lem}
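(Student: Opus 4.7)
The plan is to apply the Cauchy--Schwarz reduction set up in the paragraph preceding Lemma~\ref{lem10}, so that it suffices to prove the bound $R \ll N^{1-4\eta}Y/L$ of \eqref{eq4.4}, where I write
\[
R = \sum_{y_1, y_2 \sim Y} T(y_1, y_2), \qquad T(y_1, y_2) = \sum_{\ell = 1}^L \Bigl|\sum_{x \le N/Y} e(\ell f_{y_1, y_2}(x))\Bigr|,
\]
with $f_{y_1, y_2}(x) = g(xy_1) - g(xy_2)$. The diagonal $y_1 = y_2$ contributes $LN$, which is $\ll N^{1-4\eta}Y/L$ thanks to $Y \gg N^{2\rho}$ combined with $L = N^{\rho - \e/3}$: indeed $Y/L^2 \gg N^{2\e/3} \gg N^{4\eta}$ for $C_1$ large in $\eta = \e/C_1$.

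For the off-diagonal terms, I would set a threshold $V = LX^{1/2 + \eta}$ with $X = N/Y$, and separate the pairs into \emph{good} ($T \le V$) and \emph{bad} ($T > V$). Using $Y \ll N^{1-4\rho}$, a direct calculation shows the good contribution $Y^2 V$ is $\ll N^{1-5\eta}Y/L$. For bad pairs, Lemma~\ref{lem5} applied with $M = L$ and $J(f_{y_1, y_2}) = 2 = 2^{k-1}$ produces integers $s, u_1, u_2$ with $\langle s, u_1, u_2\rangle = 1$, $\langle s, u_2\rangle \ll LN^\eta$, $s \ll X^{1 - \eta}$, and
\[
|s\g_j - u_j| \ll L^{-1} X^{1 - \eta - j} \quad (j = 1, 2),
\]
where $\g_2 = (a/q)(y_1^2 - y_2^2)$ and $\g_1 = \a_1(y_1 - y_2)$. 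These approximations are sharper than the hypothesis of Lemma~\ref{lem7}, which therefore applies to each bad pair.

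Next I would decompose the bad pairs dyadically into $O((\log N)^2)$ classes with $s \sim U$ and $|s\g_2 - u_2| \sim B$, observing that $UB \ll 1/L$. In each class, Lemma~\ref{lem7} gives
\[
T(y_1, y_2) \ll N^{2\eta} L U^{1/2} + \sum_{\ell = 1}^L \frac{|S(s, \ell G)|}{s}\Bigl|\!\int_{I(y)} e(\ell F(z))\,dz\Bigr|,
\]
which I would estimate using the Gauss sum bound $|S(s, \ell G)| \ll (\langle \ell, s\rangle s)^{1/2}N^\eta$ together with the standard oscillatory integral estimate $\bigl|\int e(\ell F(z))\,dz\bigr| \ll \min(X, (\ell|\b_2|)^{-1/2})$, where $\b_2 = \g_2 - u_2/s$ satisfies $|\b_2| \sim B/U$. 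Simultaneously, Lemma~\ref{lem10} applied with $W = U$ and $1/X' = B$ bounds the number of pairs in the class. Multiplying the two bounds and summing over all classes yields the desired estimate $R \ll N^{1-4\eta}Y/L$.

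The main obstacle is the extreme tightness of this final balance: each dyadic class contributes within a factor $N^{O(\eta)}$ of the target, and it is here that the marginal enlargement $L > N^{\e/7} L_1$ (flagged at the end of Section~\ref{sec1}) is essential to absorb residual $N^\eta$ losses and to accommodate the degenerate subregimes $UY^2/q < 1$ and $Bq < 1$, where the pair count from Lemma~\ref{lem10} must be examined case by case.
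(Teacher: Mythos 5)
Your reduction to proving \eqref{eq4.4}, and your toolkit (Cauchy--Schwarz, \Cref{lem5}, \Cref{lem7}, \Cref{lem10}), match the paper. But you and the paper diverge at the crucial step of splitting off-diagonal pairs into good and bad. You take the threshold $V = LX^{1/2+\eta}$, the \emph{smallest} value for which \Cref{lem5} can be invoked; this is the natural reflex, but it is not what the paper does. The paper instead takes the \emph{largest} threshold consistent with the good contribution being acceptable, namely $P = N^{1-4\eta}/(YL)$, and verifies $P \ge LX^{1/2+\eta}$ using $Y \ll N^{1-4\rho}$. The distinction matters: $\Cref{lem5}$ gives $s \ll (LXP^{-1})^2 X^\eta$, so with the paper's $P$ one gets $s \ll N^{4\rho}$, whereas with your $V$ one only gets $s \ll X^{1-\eta}$, which can be as large as $N^{1-2\rho}$. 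The paper then sharpens even that: from (4.6) (Gauss sum bound plus the \emph{trivial} bound on the oscillatory integral) combined with $T > P$, it deduces $s^{1/2} \ll L^2 N^{5\eta}$. With $s$ this small, a single dyadic parameter $W$ for $s$, a single fixed $X' = Y^{-2}LN^{2-4\rho}$ in \Cref{lem10}, and the trivial integral bound suffice, and the endgame is four short estimates.

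Your route, as a result, is forced to work much harder: since $s$ can be large, you bring in the van der Corput decay $\int e(\ell F) \ll (\ell|\b_2|)^{-1/2}$, which the paper never needs at $k=2$, and you carry a two-parameter dyadic decomposition in $(U,B)$. This can probably be made to close — spot checks in the extreme regimes $(U,B)$ large and small, and in the degenerate case $\b_2 = 0$ (which \emph{can} occur for $y_1 \ne y_2$, since $q$ may divide $s(y_1^2 - y_2^2)$; your dyadic $B$-classes need to accommodate $B = 0$ explicitly) appear to come out right. But you have not actually done the multiplying-out you announce in the final paragraph; the whole burden of the proof sits there, and you yourself flag that it is ``extremely tight.'' That is a genuine gap. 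You should also note that your explanation of the role of $L > N^{\e/7}L_1$ is off: that slack is spent in the paper's four final numerical inequalities (via $q \ll (L_1 N^2)^{1/2}$ appearing in them while the sum over $\ell$ runs to $L$), not in ``absorbing residual $N^\eta$ losses in the dyadic decomposition.'' The fix is simple: raise your threshold from $V$ to $P = N^{1-4\eta}/(YL)$, extract $s^{1/2} \ll L^2 N^{5\eta}$ from the Gauss sum bound before touching \Cref{lem10}, and the two-parameter decomposition and the integral decay become unnecessary.
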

 
 \begin{proof}
As already noted, it suffices to prove \eqref{eq4.4}. Since $Y \gg L^2N^{4\eta}$, we need only consider the contribution to $R$ from pairs $(y_1, y_2)$ with $y_1 \ne y_2$,
 \[
\sum_{\ell=1}^L \Bigg|\sum_{x\le \frac NY} e(\ell(g(xy_1) - g(xy_2)))\Bigg| > N^{1-4\eta} Y^{-1}L^{-1}.
 \]

For such a pair $(y_1, y_2)$, we apply \Cref{lem5}, with $k=2$; $L$, $N/Y$ in place of $M$, $X$; and $P = N^{1-4\eta} Y^{-1}L^{-1}$. We have
 \[
P \ge L\left(\frac NY\right)^{1/2 + \eta}
 \]
since $Y \ll N^{1-4\rho}$. Thus (suppressing dependence on $y_1$, $y_2$) there are natural numbers $s$ and integers $u_1$, $u_2$ with $s \ll N^{4\rho}$, $\langle s, u_2\rangle \le LN^\eta$ and, for $\g_2 = (y_1^2 - y_2^2)\, \frac aq$, $\g_1 = (y_1 - y_2)\a_1$, satisfying
 \begin{equation}\label{eq4.5}
|s\g_j - u_j| \ll L^{-1} \left(\frac NY\right)^{-j} N^{4\rho - 2\eta} \quad (j=1,2). 
 \end{equation}
It is clear that \Cref{lem7} is applicable. Thus
 \begin{align*}
\sum_{\ell = 1}^L \Bigg|\sum_{x\le \frac NY} &e(\ell(\g_2x^2 + \g_1x)) - s^{-1} S(s, \ell G) \int_0^{N/Y}\kern-10pt e(\ell F(z)) dz\Bigg|\ll N^{2\eta} Ls^{1/2}\\[2mm]
&\ll N^{3\rho-5\eta} \ll \frac{N^{1-5\eta}}{YL}.
 \end{align*}
Thus
 \begin{align}
\sum_{\ell = 1}^L \Bigg|\sum_{x\le \frac NY} e(\ell(\g_2x^2 + \g_1x))\Bigg|&\ll \sum_{\ell=1}^L s^{-1/2} \langle s, \ell\rangle^{1/2} \frac NY\label{eq4.6}\\[2mm]
&\ll L s^{-1/2}\, \frac{N^{1+\eta}}Y.\notag
 \end{align}
In particular, for these pairs $(y_1, y_2)$ we have
 \[
s^{1/2} \ll L^2 N^{5\eta}\ ; \ 2s \le N^{4\rho}.
 \]

Let $X = Y^{-2} LN^{2-4\rho}$. Then \eqref{eq4.5} implies
 \begin{equation}\label{eq4.7}
\left\|\frac{sa(y_1^2 - y_2^2)}q\right\| < \frac 1X. 
 \end{equation}
The number of pairs $(y_1, y_2)$ with $s\sim W$ (for $W \le N^{4\rho}$) is
 \[
\ll \left(\frac{WY^2}q + 1\right) \left(1 + \frac qX\right) N^\eta
 \]
by \Cref{lem10}, and these pairs contribute to $R$ an amount
 \begin{align*}
&\ll \frac{LN^{1+2\eta}}{YW^{1/2}} \left(\frac{WY^2}q + 1\right)\left(1 + \frac qX\right)\\
\intertext{(by \eqref{eq4.6})}
&\ll \frac{W^{1/2}LN^{1+2\eta}Y}q + \frac{W^{1/2}L N^{1+2\eta}Y}X + \frac{LN^{1+2\eta}}Y + \frac{LN^{1+2\eta}q}{XY}.
 \end{align*}
Each of these four terms is $\ll N^{1-5\eta}YL^{-1}$:
 \begin{align*}
\frac{W^{1/2}LN^{1+2\eta}Y}q &\ll N^{5\rho/2} Y \ll N^{1-5\eta} YL^{-1} \quad (\text{since } \rho < 2/7);\\[2mm]
\frac{W^{1/2}LN^{1+2\eta}Y}X &=  W^{1/2} N^{-1+ 4\rho+2\eta} Y^3 \ll N^{-1+6\rho+2\eta} Y^3 \ll \frac{N^{1-5\eta}Y}L\\
\intertext{(since $Y \ll N^{1-4\rho}$)}
\frac{LN^{1+2\eta}}Y &\ll \frac{N^{1-5\eta}Y}L \quad (\text{since } Y \gg N^{2\rho});\\
\intertext{and}
\frac{LN^{1+2\eta}q}{XY} &= qN^{-1+4\rho + 2\eta} Y \ll N^{\frac{9\rho}2 +2\eta} Y \ll \frac{N^{1-5\eta}Y}L
 \end{align*}
(since $\rho < \frac 2{11}$).
  \bigskip

We now sum over $O(\log N)$ values of $W = N^{4\rho}2^{-j}$ and obtain the desired bound \eqref{eq4.4}. This completes the proof of \Cref{lem11}.
 \end{proof}
 \vskip .3in

\section{Application of the Harman sieve.}\label{sec5}

We use the standard notations
 \begin{gather*}
P(z) = \prod_{p < z} p ,\\[2mm]
E_d = \{n : dn \in E\} \quad \text{for a finite subset $E$ of $\mb N$, while}\\[2mm]
S(E, z) = \sum_{\substack{n\in E\\[1mm] \langle n, P(z)\rangle = 1}} 1, \quad \chi_E = \text{indicator function of $E$}.
 \end{gather*}
We `compare' the set $A$ introduced in \Cref{sec1}, $(A = A(f_k), J = J(f_k))$ with the set
 \[
B = \left\{n \in \mb N : \frac N2 < n \le N\right\}.
 \]
We write $[\a, \a + \b] = [2\rho, 1 - 5\rho/2]$ $(k = 2)$; $[\a, \a+\b] = [\rho, 1 - 2J\rho]$ $(k > 2)$.

 \begin{lem}\label{lem12}
Let $u_h$ $(h \le H)$ be real numbers with $|u_h| \le 1$, $u_h = 0$ for $(h, P(N^\eta)) > 1$. Suppose that
 \begin{equation}\label{eq5.1}
H < MN^{-\a} 
 \end{equation}
where $M \ll N^{1-5\rho/2}$ for $k=2$ and $M \ll N^{1/2+\rho}$ for $k \ge 3$. Then, writing $\d$ for $L_1^{-1}$,
 \[
\sum_{h \le H} u_h S(A_h, N^\b) - 2\d \sum_{h \le H} u_h S(B_h, N^\b) \ll \d N^{1-\eta/2}.
 \]
 \end{lem}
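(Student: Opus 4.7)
The plan is to combine Vaaler's Fourier approximation of the fractional-part detector with Buchstab iteration of the sieve $S(A_h,N^\b)$, reducing the difference to the exponential-sum estimates of \Cref{lem8,lem9,lem11}.

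My first step would be to apply Vaaler's lemma, which furnishes a trigonometric polynomial
\[
\chi_{\|t\|<\d}(t) = 2\d + \sum_{1\le|\ell|\le L} c_\ell\,e(\ell t) + O(\Phi_L(t)),
\]
with $|c_\ell|\ll\d$ for $\ell\le L$ (since $L>L_1=\d^{-1}$) and a non-negative majorant $\Phi_L$ of $L^1$-norm $\ll 1/L$. Using $\chi_A(m)=\chi_B(m)\chi_{\|g(m)\|<\d}$ and substituting, the left-hand side of \Cref{lem12} becomes $\sum_\ell c_\ell\sum_h u_h T_\ell(h) + \mc E$, where
\[
T_\ell(h) := \sum_{\substack{n\in B_h\\(n,P(N^\b))=1}} e(\ell g(hn)),
\]
and the Vaaler error $\mc E$ should be bounded by $\ll \d N^{1-\eta}$ via the Fourier expansion of $\Phi_L$ and \Cref{lem8} (applicable because $h\le H\le M$ lies in the Type I range). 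Since $|c_\ell|\ll\d$, it would then suffice to show
\[
U := \sum_{\ell\le L}\Bigl|\sum_{h\le H} u_h T_\ell(h)\Bigr| \ll N^{1-\eta/2}.
\]

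I would then iterate Buchstab's identity a constant number of times to decompose the sieve: for a chosen depth $r$,
\[
S(A_h,N^\b) = \sum_{0\le j<r}(-1)^j\sum_{N^\b>p_1>\dots>p_j}|A_{hp_1\cdots p_j}| + (-1)^r\sum_{p_1>\dots>p_r}S(A_{hp_1\cdots p_r},p_r),
\]
with the parallel expansion for $S(B_h,N^\b)$. Each ``Type I'' layer ($j<r$) contributes to $U$ a sum of the form $\sum_\ell\sum_m w_m\,|\sum_n e(\ell g(mn))|$ with $m=hp_1\cdots p_j$ and $|w_m|\le\tau(m)\ll N^\eta$; whenever $m\le M$, \Cref{lem8} gives $\sum_\ell\sum_{m\sim Y}|\sum_n e|\ll N^{1-2\eta}$ per dyadic $Y\le M$, so this layer contributes $\ll N^{1-\eta}$ to $U$.

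The hard part will be handling the dyadic blocks where $m>M$, together with the Buchstab residual at level $r$. For those, the plan is to exploit the smoothness $p_i<N^\b$ (or the inner sieve inside the residual) to recast the block as a bilinear sum in variables $(y_1,y_2)$ with a one-sided divisor-type weight of size $\ll N^\eta$. The hypothesis $H<MN^{-\a}$ together with $\a+\b=1-2J\rho$ (or $1-5\rho/2$ if $k=2$) is designed so that every such bilinear configuration has one side inside the Type II window $[N^\rho,N^{1-2J\rho}]$ of \Cref{lem9} (or $[N^{2\rho},N^{1-4\rho}]$ of \Cref{lem11} when $k=2$). A one-sided $N^\eta$ weight degrades the gain in those lemmas' Cauchy--Schwarz step by only a factor $N^{2\eta}$, which should be absorbable into the $N^{-4\eta}$ diagonal estimate since $\eta\ll\e$. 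Summing the $O(\log N)$ dyadic and $O(1)$ Buchstab contributions should then give $U\ll N^{1-\eta/2}$, completing the proof.
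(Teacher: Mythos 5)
Your plan has the right building blocks (Fourier/Vaaler approximation of the detector, Buchstab iteration, reduction to the bounds of \Cref{lem8}, \Cref{lem9}, \Cref{lem11}), but the route is genuinely different from the paper's and it leaves a real gap. The paper does \emph{not} attempt to carry out the Buchstab decomposition by hand: it invokes Lemma~14 of Baker--Weingartner (a variant of Harman's Theorem~3.1) as a black box, with weight $w(n)=\chi_A(n)-2\d\chi_B(n)$. That lemma reduces \Cref{lem12} to two clean hypotheses, a Type~I bound $\sum_{m\le M}a_m(\chi_A(mn)-2\d\chi_B(mn))\ll \d N^{1-2\eta/3}$ for arbitrary $|a_m|\le 1$, and a Type~II bound of the same quality over $N^\a\le m\le N^{\a+\b}$ with arbitrary $|a_m|,|b_n|\le 1$; only \emph{then} is $\chi_A$ replaced by Fourier majorants/minorants, whereupon \Cref{lem8} and \Cref{lem9} finish the job. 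You instead apply Vaaler first and then try to decompose the resulting sieve-weighted exponential sum $T_\ell(h)$ by Buchstab. That is not wrong in principle, but it amounts to re-proving the Harman sieve theorem inline.

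The concrete gap is in the sentence ``the hypothesis $H<MN^{-\a}$ together with $\a+\b=1-2J\rho$ \dots is designed so that every such bilinear configuration has one side inside the Type~II window.'' This is asserted, not argued, and it is precisely the content of the cited sieve lemma. After $r$ Buchstab steps the products $hp_1\cdots p_j$ with $p_i<N^\b$ and $h\le H$ do not automatically have a subproduct in $[N^\a,N^{\a+\b}]$, nor do they automatically lie below $M$; demonstrating that every term eventually falls into Type~I, Type~II, or a negligible remainder requires the careful iteration/role-reversal argument of Harman's Theorem~3.1. Without that argument your ``hard part'' paragraph is a restatement of the goal, not a proof. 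A secondary issue is the order of operations: by pushing Vaaler through before Buchstab you carry the sieve condition and divisor-bounded weights through every step, which is workable but obscures the point that the sieve lemma needs the Type~I/II bounds to hold for \emph{arbitrary} bounded $a_m,b_n$; it is cleaner (and is what the paper does) to invoke the sieve lemma first and only then pass to exponential sums.
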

 
 \begin{proof}
We first apply \cite{bw}, Lemma 14, which is a variant of \cite[Theorem 3.1]{har4} convenient for our purposes. By choosing the weight function of the lemma to be
 \[
w(n) = \chi_A(n) - 2\d \chi_B(n),
 \]
we see that we need only show that
 \begin{align*}
&\sum_{\substack{\frac N2 < mn \le N\\
m \le M}} a_m (\chi_A(mn) - 2\d \chi_B(mn)) \ll \d N^{1-2\eta/3},\\[2mm]
&\sum_{N^\a \le m \le N^{\a+\b}} a_m \sum_n b_n(\chi_A(mn) - 2\d \chi_B(mn)) \ll \d N^{1-2\eta/3}
 \end{align*}
where the $a_m$, $b_n$ are arbitrary with $|a_m| \le 1$, $|b_n| \le 1$.  Now a standard use of upper and lower bounds for the indicator function of $\chi_A$ reduces our task to proving bounds for exponential sums that have already been given in \Cref{lem8} and \Cref{lem9}; compare, for example, the arguments in \cite[Section 3.4]{har4}.
 \end{proof}
 
Another `comparison' that will be needed below is a bound $O(\d N^{1-2\eta/3})$ for a sum
 \[
\sum_{N^\a \le p \le N^{\a+\b}} (S(A_p, p) - 2\d S(B_p, p))
 \]
This reduces to a sum of $O(1)$ expressions, in which $r = O(1)$, of the form
 \[
\sum_{\substack{N^\a \le p \le N^{\a+\b}, \, \frac N2 < pp_1\ldots p_r \le N\\[1mm]
p\le p_1 \le p_2 \le \cdots \le p_r}} \{\chi_A(pp_1 \ldots p_r) - 2\d \chi_B(pp_1\ldots p_r)\}.
 \]
\Cref{lem9} provides a satisfactory estimate, using \cite[Section 3.2]{har4} to remove the condition $p \le p_1$ that will occur in the Type II exponential sums that arise.

 \begin{proof}[Proof of \Cref{thm1} for $k \ge 3$].
We observe that
 \begin{align}
\#\{p:p \in A\} &= S(A, (2N)^{1/2})\label{eq5.2}\\[1mm]
&= S(A, N^\a) - \sum_{N^\a \le p \le (2N)^{1/2}} S(A_p, p)\notag
 \end{align}
by an application of Buchstab's identity. Iterating the procedure,
 \[
\#\{p:p \in A\} = S_1 - S_2 - S_3 + S_4
 \]
where
 \begin{align*}
S_1 &= S(A, N^\a), \quad S_2 = \sum_{N^\a \le p \le N^{\a+\b}} S(A_p, p),\\[2mm]
S_3 &= \sum_{N^{\a+\b} < p \le (2N)^{1/2}} \Bigg(S(A_p, N^\a) - \sum_{N^\a \le q < N^{\a+\b}} S(A_{pq},q)\Bigg)\\[2mm]
S_4 &= \sum_{\substack{N^{\a+\b} < p \le (2N)^{1/2}\\
N^{\a+\b} \le q <p}} S(A_{pq}, q).
 \end{align*}

Define $S_1'$, $S_2'$, $S_3'$, and $S_4'$ in the same way as $S_1, \ldots, S_4$, with $A$ replaced by $B$. We observe that
 \begin{align*}
\#\{p : p \in A\} &\ge S_1 - S_2 - S_3\\
&= 2\d(S_1' - S_2' - S_3') + O(\d N^{1-\eta/2})
 \end{align*}
by applying \Cref{lem12}, and the remarks following the proof of \Cref{lem12}, to $S_1$, $S_2$, and $S_3$. (The condition \eqref{eq5.1} will be trivial for $S_1$, and will amount to
 \[
(2N)^{1/2} \le MN^{-\rho}
 \]
for $S_3$, where $M= 2^{1/2}N^{1/2+\rho}$.) We now follow arguments familiar from \cite{har4}. To show that $S_1' - S_2' - S_3' > b(N\d/\log N)$ for a small positive $b$, it suffices to show that, $\omega$ denoting the Buchstab function,
 \begin{equation}\label{eq5.3}
\iint_{0.1842 <y < x < 1/2, \, x + 2y < 1} \omega \left(\frac{1-x-y}y\right) \frac{dx}x\ \frac{dy}{y^2} < 1.
 \end{equation}
(Here we use $\a + \b = 2/10$ for $k=3$ and $\a + \b = 0.1842$ for $k\ge 4$.)  The constant 0.1842 is close to sharp (within $10^{-4}$). The validity of \eqref{eq5.3} was kindly checked by Andreas Weingartner.
 \end{proof}
 
 \begin{proof}[Proof of \Cref{thm1} for $k=2$.]
We now have
 \[
[\a, \a + \b] = \left[\frac 4{13}, \frac 5{13}\right]
 \]
and the condition \eqref{eq5.1} reduces to
 \[
H \ll N^{8/13}.
 \]
We can weaken this restriction on $H$ to
 \[
H \ll N^{9/13}
 \]
by treating the Type I exponential sums in question as Type II exponential sums with one variable between $N^{1-5/13}$ and $O(N^{1-4/13})$. Now we have stronger `arithmetic information' than is used by Harman \cite[Section 5.3]{har4} in the discussion of the Diophantine inequality.
 \[
\|p\a + \b\| < p^{-0.3182} \quad \left(\text{since } \frac 2{13} < \frac{0.3182}2\right).
 \]
We can follow the proof there verbatim to obtain \Cref{thm1}.
 \end{proof}

\hskip .5in

 \end{document}